\newtheorem{Lem}{Lemma}[section]
\newtheorem{Thm}[Lem]{Theorem}
\newtheorem{Conj}[Lem]{Conjecture}
\newtheorem{Def}[Lem]{Definition}
\newtheorem{Rm}[Lem]{Remark}
\newtheorem{Notation}[Lem]{Notation}
\DeclareMathOperator*{\argmin}{\arg\!\min}
\DeclareMathOperator*{\argmax}{\arg\!\max}
\DeclareMathOperator*{\Mod}{\;\text{mod}\;}
\begin{document}

\title{The Long Time Limit of Diffusion Means}
% Use \titlerunning{Short Title} for an abbreviated version of
% your contribution title if the original one is too long
\author{Till D\"usberg\footnote{Institute for Mathematical Stochastics, University of G\"ottingen}{ } and Benjamin Eltzner\footnote{Max Planck Institute for Multidisciplinary Sciences, G\"ottingen, \texttt{benjamin.eltzner@mpinat.mpg.de}}}
% Use \authorrunning{Short Title} for an abbreviated version of
% your contribution title if the original one is too long
%\institute{Till D\"usberg \at Institute for Mathematical Stochastics, Goldschmidtstraße 7, 37077 G\"ottingen, \email{till.duesberg@stud.uni-goettingen.de}
%\and Benjamin Eltzner \at Max Planck Institute for Multidisciplinary Sciences, Am Fassberg 11, 37077 G\"ottingen \email{benjamin.eltzner@mpinat.mpg.de}}
%
% Use the package "url.sty" to avoid
% problems with special characters
% used in your e-mail or web address
%
\maketitle

\abstract{ In statistics on manifolds, the notion of the mean of a probability distribution becomes more involved than in a linear space. Several location statistics have been proposed, which reduce to the ordinary mean in Euclidean space. A relatively new family of contenders in this field are Diffusion Means, which are a one parameter family of location statistics modeled as initial points of isotropic diffusion with the diffusion time as parameter. It is natural to consider limit cases of the diffusion time parameter and it turns out that for short times the diffusion mean set approaches the intrinsic mean set. For long diffusion times, the limit is less obvious but for spheres of arbitrary dimension the diffusion mean set has been shown to converge to the extrinsic mean set. Here, we extend this result to the real projective spaces in their unique smooth isometric embedding into a linear space. We conjecture that the long time limit is always given by the extrinsic mean in the isometric embedding for connected compact symmetric spaces with unique isometric embedding.}

\section{Introduction}
\label{sec:intro}
One of the most basic problems facing attempts to establish statistical methods on non-Euclidean spaces is the fact that the mean cannot be easily generalized. A very general and early approach by Fr\'echet, see \cite{frechet_les_1948}, defines the set of means as the set of global minimizers of the expected squared distance on any metric space. Since non-Euclidean spaces admit a multitude of sensible metrics, this gives rise to a variety of potential means.

On smooth Riemannian manifolds, the metric tensor defines a canonical metric function, which gives rise to the so-called \textit{intrinsic mean set}. If a smooth Riemannian manifold is embedded into a Euclidean space, one can also consider the chordal distance, i.e. the distance along a straight line in the embedding space to define the so-called \textit{extrinsic mean set}, first discussed by \cite{HL96} and \cite{bhattacharya_large_2003}. Obviously, the extrinsic mean set depends strongly on the chosen embedding. More recently, \cite{EHHS2023} have introduced the so-called \textit{diffusion mean}, a one parameter set of mean candidates which are defined by the maximum likelihood parameter of the center of an isotropic Brownian motion, where the diffusion time is the free parameter. In contrast to the Euclidean case, the solution set is not independent of diffusion time and can contain more than one element.

In \cite{EHHS2023}, the two limits of diffusion time $t \to 0$ and $t \to \infty$ were considered and for the former case it was found that the diffusion mean set converges to a subset of the intrinsic mean set under weak conditions. The latter limit was only considered for the circle and spheres of arbitrary dimension in the respective canonical embeddings. In these cases, the limit for $t \to \infty$ was shown to be a subset of the extrinsic mean set.

Since the extrinsic mean set depends on the embedding, it is not immediately clear if and how the result for $t \to \infty$ can be generalized to other manifolds. In the present paper, we consider diffusion means on real projective spaces of arbitrary dimension which are smoothly and isometrically embedded into Euclidean space. Due to the Nash Embedding Theorem, such an embedding exists and for real projective spaces, \cite{zhang} give an explicit form of such an embedding and cite the result by \cite{DoCarmoWallach1971} from which it follows that the embedding is unique up to isometries of Euclidean space.

In this setting, we generalize the result found for spheres and we conjecture that for all compact connected symmetric spaces with a smooth isometric embedding that is unique up to isometries of Euclidean space the diffusion mean set converges to a subset of the extrinsic mean set with respect to the smooth isometric embedding for $t \to \infty$.

The results presented in this paper are for the most part an abridged version of the results in \cite{Duesberg2024_master}. Theorems and proofs from this thesis are highlighted by citation below and are reproduced here almost verbatim.

\section{Preliminaries}
\label{sec:prelim}

We begin by introducing the relevant statistical and geometric objects. First, we introduce real projective spaces with radius $r > 0$ following \cite{Duesberg2024_master}.

\begin{Def}[Real projective space]\label{def:rp}\cite{lee_smooth}[p.6]\cite{lee_top}[p.101]
Let $m \in \mathbb{N}$. The $m$-dimensional \emph{real projective space} is the set of one dimensional linear subsets of $\mathbb{R}^{m+1}$. For an $r > 0$, it can be characterised as the $m$-dimensional \emph{real projective space with radius $r$} by
$$\mathbb{RP}^{m}(r) := \{[x]_{\sim}: x \in \mathbb{S}^{m}(r)\},$$
where $x \sim y :\Leftrightarrow x = y \lor x = -y $.
By convention, $\mathbb{RP}^{m} := \mathbb{RP}^{m}(1)$.
Denote by $\pi_{r}$ the quotient map, which is defined by 
$$ \pi_{r}:\mathbb{S}^{m}(r) \to \mathbb{RP}^{m}(r), \;\pi_{r}(x) := [x]_{\sim}.$$
By convention, $\pi := \pi_{1}$.
\end{Def}

In all of the following, we restrict to the case $m \ge 2$ for simplicity. The case $m=1$ is equivalent to the circle, which has been considered in \cite{EHHS2023}. Next we introduce the heat kernel and the diffusion means.

\begin{Def}[Heat kernel and diffusion means]\phantom{ }\\ \cite{EHHS2023}
    \begin{enumerate}
        \item A Riemannian manifold $(\mathcal{M}, g)$ is called \emph{stochastically complete} if there exists a minimal solution $p$ of the heat equation $\frac{d}{dt}p(x,y,t) = \frac{1}{2}\Delta_{g,x} p(x,y,t)$ satisfying $\int_{\mathcal{M}} p(x,y,t)dy = 1$ for all $x\in\mathcal{M}$ and $t>0$. Here, $\Delta_{g,x}$ denotes the Laplace-Beltrami operator with respect to the metric tensor $g$ acting on a function of $x$. We say that $p$ is the \emph{minimal heat kernel} of $\mathcal{M}$.
        \item With the probability space $(\Omega, \mathcal{F},\mathbb{P})$, let $X$ be a random variable on $\mathcal{M}$ and fix $t>0$. The \emph{diffusion $t$-mean set} $E_t(X)$ of $X$ is the set of global minima of the log-likelihood function $L_t = \mathbb{E}[-\ln(p(X,y,t))]$, i.e. 
	\begin{equation*}
	E_t(X) = \argmin_{y\in \mathcal{M}}\mathbb{E}[-\ln(p(X,y,t))].
	\end{equation*}
	If $E_t(X)$ contains a single point $\mu_t$, we say that $\mu_t$ is the \emph{diffusion $t$-mean} of $X$. 
    \end{enumerate}
\end{Def}

For connected compact Riemannian manifolds, the heat kernel can be expressed as an asymptotic series, the so-called heat kernel expansion introduced below.

\begin{Thm}\cite{chavel}[p.139f]\cite{grigoryan}[p.277]\label{thm:heat_expansion}
Let $(\mathcal{M},g)$ be a connected compact Riemannian mani\-fold. Then the following statements are true:
\begin{enumerate}
\item[1)]  The set of eigenvalues of $\frac{1}{2}\Delta_{g}$ is countably infinite, and can therefore be denoted by $(\lambda_{l})_{l=0}^{\infty}$, where each $\lambda_{l} \in \mathbb{R}$ is a distinct eigenvalue of $\frac{1}{2}\Delta_{g}$ with multiplicity $N_{l} \in \mathbb{N}$.
\item[2)] For all $l \in \mathbb{N}_{0}$, there exist $N_{l}$ smooth eigenfunctions $\varphi_{l,1},\dots,\varphi_{l,N_{l}}$ of $\frac{1}{2}\Delta_{g}$ to the distinct eigenvalue $\lambda_{l}$, such that the heat kernel $p$ of $\mathcal{M}$ takes the form
$$p(x,y,t) = \sum_{l=0}^{\infty}e^{\lambda_{l}t} \sum_{k=1}^{N_{l}} \varphi_{l,k}(x)\varphi_{l,k}(y),$$ 
and $\left((\varphi_{l,k})_{k=1}^{N_{l}}\right)_{l=0}^{\infty}$ forms an orthonormal basis of the space $L^{2}(\mathcal{M},\mathbb{R})$.
\item[3)] Any orthonormal basis of $L^{2}(\mathcal{M},\mathbb{R})$ comprising respectively $N_{l}$ eigenfunctions of $\frac{1}{2}\Delta_{g}$ to each of the distinct eigenvalues $\lambda_{l}$ gives rise to the expansion of $p$ above, which will be referred to as \emph{heat kernel expansion}. 
\item[4)] The heat kernel expansion converges absolutely for all $t > 0$.
\end{enumerate}
\end{Thm}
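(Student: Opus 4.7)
The plan is to reduce parts 1)--3) to the general spectral theory of elliptic self-adjoint operators on compact manifolds, and to handle part 4) through eigenvalue and eigenfunction asymptotics. First, I would verify that $\tfrac{1}{2}\Delta_g$, initially defined on $C^\infty(\mathcal{M})$, extends to an unbounded self-adjoint operator on $L^2(\mathcal{M},\mathbb{R})$ with compact resolvent; the compactness comes from the Rellich--Kondrachov embedding $H^1(\mathcal{M})\hookrightarrow L^2(\mathcal{M})$ available because $\mathcal{M}$ is compact. The spectral theorem for operators with compact resolvent then yields a discrete real spectrum $(\lambda_l)_{l\ge 0}$ with finite multiplicities $N_l$ and $\lambda_l\to-\infty$, which is precisely 1), together with an $L^2$-orthonormal eigenbasis $(\varphi_{l,k})$. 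Elliptic regularity promotes each eigenfunction to $C^\infty(\mathcal{M})$, completing the setup for 2).

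Next, I would define the formal series $K(x,y,t) := \sum_{l\ge 0} e^{\lambda_l t}\sum_{k=1}^{N_l}\varphi_{l,k}(x)\varphi_{l,k}(y)$ and show that it coincides with the minimal heat kernel $p$. Viewed as an operator, $K(\cdot,\cdot,t)$ implements the heat semigroup $e^{t\Delta_g/2}$ on $L^2$, which is uniquely determined by the spectral calculus. Identifying $K$ with $p$ then amounts to checking that $K$ is a non-negative, symmetric, smooth fundamental solution of the heat equation and invoking uniqueness of the minimal heat kernel; compact manifolds without boundary are automatically stochastically complete, so this uniqueness applies. Statement 3) is then immediate, because the inner sum $\sum_{k=1}^{N_l}\varphi_{l,k}(x)\varphi_{l,k}(y)$ is the integral kernel of the orthogonal projection onto the eigenspace of $\lambda_l$, and this kernel is invariant under unitary changes of basis within the eigenspace.

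The main obstacle is part 4), absolute convergence for every $t>0$. The key analytic ingredients are Weyl's law $-\lambda_l\sim C\, l^{2/n}$ with $n=\dim\mathcal{M}$, together with uniform eigenfunction bounds of the form $\|\varphi_{l,k}\|_\infty\le C(1+|\lambda_l|)^{n/4}$. Such bounds follow either from Sogge-type estimates or more elementarily by combining Mercer's theorem, which gives pointwise finiteness of $p(x,x,t)$ for $t>0$ since $e^{t\Delta_g/2}$ is trace class, with the reproducing identity $\varphi_{l,k}(x)=e^{-\lambda_l t}\int p(x,y,t)\varphi_{l,k}(y)\,dy$ and Cauchy--Schwarz, after optimizing over $t$ in terms of $\lambda_l$. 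Once these bounds are in place, the super-exponential decay of $e^{\lambda_l t}$ dominates the polynomial growth of $N_l\|\varphi_{l,k}\|_\infty^2$, yielding absolute and uniform convergence on $\mathcal{M}\times\mathcal{M}\times[t_0,\infty)$ for every $t_0>0$, which gives the claim.
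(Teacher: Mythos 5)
This theorem is stated in the paper as a cited background result from \cite{chavel} and \cite{grigoryan}, so the paper gives no proof of its own to compare against. Your outline is essentially the standard textbook argument contained in those references --- self-adjointness and compact resolvent via Rellich--Kondrachov, elliptic regularity, identification of the spectral series with the minimal heat kernel (using stochastic completeness of compact manifolds), basis-independence via the projection kernels, and convergence from Weyl's law plus the sup-norm bound $\|\varphi_{l,k}\|_\infty \lesssim (1+|\lambda_l|)^{n/4}$ obtained from the reproducing identity --- and it is correct; the only point worth making explicit is that the existence and on-diagonal bound of $p$ must be established independently (e.g.\ by a parametrix construction) before being used to derive the eigenfunction bounds, so that step 4) is not circular.
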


For the derivation of the heat kernel expansion in the real projective space, we use a somewhat coarse grained argument compared to \cite{Duesberg2024_master} to simplify the presentation.

\begin{Thm}[Heat kernel expansion]\cite[Theorem~4.1.4]{Duesberg2024_master}\label{thm:heat_kernel_rp}
Let $m \in \mathbb{N}_{\ge 2}, r > 0$. The heat kernel of $\mathbb{RP}^{m}(r)$ takes the form
$$p([x],[y],t) = \sum_{l=0,l\;\text{mod}\;2=0}^{\infty}e^{-\frac{1}{2}l(l+m-1)\frac{t}{r^2}} \frac{2}{r^{m}\mathcal{A}_{\mathbb{S}^{m}}}\frac{2l+m-1}{m-1}  C_{l}^{(m-1)/2}\left(\left\langle \frac{x}{r},\frac{y}{r} \right\rangle\right)\, ,$$
where $C_{l}^{(m-1)/2}$ denotes the Gegenbauer polynomials, see e.g. \cite{AS1972}, and $\mathcal{A}_{\mathbb{S}^{m}}$ denotes the volume of $\mathbb{S}^{m}$.
\end{Thm}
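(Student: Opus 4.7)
The plan is to reduce the problem to the known heat kernel expansion on the sphere $\mathbb{S}^{m}(r)$ by exploiting the Riemannian double cover $\pi_{r}:\mathbb{S}^{m}(r)\to\mathbb{RP}^{m}(r)$, which becomes a local isometry once $\mathbb{RP}^{m}(r)$ is equipped with the quotient metric. Concretely I would (i) recall the spherical heat kernel in closed form using the addition theorem for spherical harmonics, (ii) invoke the method of images to express the projective heat kernel as a sum of two sphere heat kernels, and (iii) collapse the result using the parity of Gegenbauer polynomials to obtain the sum over even $l$.

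For step~(i), the eigenvalues of $\tfrac{1}{2}\Delta_{g}$ on $\mathbb{S}^{m}(r)$ are $-\tfrac{1}{2}l(l+m-1)/r^{2}$ for $l\in\mathbb{N}_{0}$ with eigenspaces spanned by degree-$l$ spherical harmonics. Combining Theorem~\ref{thm:heat_expansion} with the Gegenbauer addition formula gives
$$p_{\mathbb{S}^{m}(r)}(x,y,t) = \sum_{l=0}^{\infty} e^{-\frac{1}{2}l(l+m-1)\frac{t}{r^{2}}}\, \frac{1}{r^{m}\mathcal{A}_{\mathbb{S}^{m}}}\, \frac{2l+m-1}{m-1}\, C_{l}^{(m-1)/2}\!\left(\left\langle \tfrac{x}{r},\tfrac{y}{r}\right\rangle\right).$$
For step~(ii), the antipodal map $A(y)=-y$ is an isometric involution acting freely on $\mathbb{S}^{m}(r)$ with deck transformation group $\{\mathrm{id},A\}$, so the method of images yields
$$p_{\mathbb{RP}^{m}(r)}([x],[y],t) = p_{\mathbb{S}^{m}(r)}(x,y,t) + p_{\mathbb{S}^{m}(r)}(x,-y,t).$$
I would verify that this is the minimal heat kernel by checking that the right-hand side descends to a smooth function of $[x],[y]$, satisfies the heat equation, and integrates to $1$ against the quotient volume (which is half of $r^{m}\mathcal{A}_{\mathbb{S}^{m}}$, precisely the origin of the prefactor $2$). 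For step~(iii), the identity $C_{l}^{(m-1)/2}(-u)=(-1)^{l}C_{l}^{(m-1)/2}(u)$ cancels odd-$l$ terms and doubles even-$l$ terms, producing exactly the claimed formula.

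The main obstacle is not algebraic but analytic: one must legitimately permute the sum with the substitution $y\mapsto -y$, pass to the quotient, and reorganize the result as an expansion in eigenfunctions of $\tfrac{1}{2}\Delta_{g}$ on $\mathbb{RP}^{m}(r)$. This is handled by the absolute convergence asserted in part~(4) of Theorem~\ref{thm:heat_expansion}. A conceptually cleaner alternative, which I might present instead, is to observe directly that the $L^{2}(\mathbb{RP}^{m}(r),\mathbb{R})$ eigenfunctions of $\tfrac{1}{2}\Delta_{g}$ correspond bijectively to the $A$-invariant spherical harmonics on $\mathbb{S}^{m}(r)$, i.e.\ those of even degree; applying Theorem~\ref{thm:heat_expansion} to this basis, together with the addition theorem, then yields the expansion over even $l$ directly, with the prefactor $2/(r^{m}\mathcal{A}_{\mathbb{S}^{m}})$ arising from the $L^{2}$-renormalization forced by the halved volume.
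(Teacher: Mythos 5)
Your proposal is correct and follows essentially the same route as the paper: both start from the spherical heat kernel expansion, use the parity of the Gegenbauer polynomials to discard the odd-degree terms, and account for the halved volume of the quotient to obtain the prefactor $2/(r^{m}\mathcal{A}_{\mathbb{S}^{m}})$. Your "cleaner alternative" via the even-degree (antipodally invariant) eigenfunctions is in fact the argument the paper gives, with your method-of-images derivation supplying the justification the paper leaves implicit.
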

\begin{proof}
The heat kernel expansion on $\mathbb{S}^m(r)$ is
$$p([x],[y],t) = \sum_{l=0}^{\infty}e^{-\frac{1}{2}l(l+m-1)\frac{t}{r^2}} \frac{1}{r^{m}\mathcal{A}_{\mathbb{S}^{m}}}\frac{2l+m-1}{m-1}  C_{l}^{(m-1)/2}\left(\left\langle \frac{x}{r},\frac{y}{r} \right\rangle\right)\, .$$
$C_{l}^{(m-1)/2}$ is symmetric for even $l$ and antisymmetric for odd $l$, therefore only the even terms, which are invariant under the antipodal point symmetry that is factored out, contribute for the real projective space. Also, the total volume of the real projective space is only $\mathcal{A}_{\mathbb{S}^{m}} / 2$.

Plugging in these modifications, yields the result
\begin{align*}
p([x],[y],t) = \sum_{l=0, l \Mod 2 = 0}^{\infty} e^{-l(l+m-1)\frac{t}{2r^2}}\frac{2} {r^{m}\mathcal{A}_{\mathbb{S}^{m}}}\frac{2l+m-1}{m-1} C_{l}^{(m-1)/2}\left(\left\langle \frac{x}{r},\frac{y}{r} \right\rangle\right)
\end{align*}
\end{proof}

Since the diffusion mean sets can contain multiple points, one cannot simply investigate convergence of point sequences. Instead we need to consider appropriate notions of set convergence. In the present context, the notions of asymmetrical subset convergence described by the Kuratowski limits are used. Indeed, the results we show here cannot be strengthened to hold for limits based on the Hausdorff distance without significant additional restrictions.

\begin{Def}[Set convergence]\cite{kuratowski}[p.335ff] \cite{royset2020}\\ \cite{rockafellar2009}[p.109]\label{def:set_conv}\\
Let $(Q,d)$ be a metric space, $(A_{n})_{n \in \mathbb{N}} \subseteq Q$ be a sequence of sets in $Q$. Let  $N_{\infty}^{\#}$ denote the set of all infinite, strictly monotone increasing sequences in $\mathbb{N}$.
Then the \emph{Kuratowski lower limit} $\text{Li}_{n \to \infty}$ and \emph{Kuratowski upper limit} $\text{Ls}_{n\to\infty}$ of $(A_{n})_{n\in\mathbb{N}}$ are defined by
\begin{align*}
&\text{Li}_{n \to \infty} A_{n} := \{x \in Q| \exists (x_{n})_{n \in \mathbb{N}} \subseteq Q, x_{n} \in A_{n}:\lim_{n \to \infty} x_{n} = x \},\\ 
&\text{Ls}_{n \to \infty} A_{n} := \{x \in Q| \exists N \in N_{\infty}^{\#} \exists (x_{n})_{n \in N} \subseteq Q, x_{n} \in A_{n}: \lim_{n \to \infty, n\in N} x_{n} = x \}.
\end{align*}
%If $\text{LimInn} A_{n} =\text{LimOut} A_{n}$, the \textbf{limit} of $(A_{n})_{n}$ is defined by
%$$ \text{Lim} A_{n} = \text{LimInn} A_{n} =\text{LimOut} A_{n}.$$
\end{Def}

Since the limit $t \to \infty$ runs over an uncountable set, we slightly extend the definition of Kuratowski limits as limits for all divergent countable sequences $\{t_k\}_k \subset \mathbb{R}_{\ge 0}$.

\begin{Def}\cite[Definition~2.5.15]{Duesberg2024_master}\label{def:limits}
Let $(M,g)$ be a connected compact Riemannian mani\-fold, $M$ equipped with a metric $d$ (not necessarily induced by the Riemannian metric). Let $X$ be a random variable mapping to $M$. Then define the %\textbf{inner and outer long diffusion time diffusion mean set} 
\emph{lower} and \emph{upper long diffusion time diffusion mean limit} sets by 
$$ E_{\infty}^{lower}(X) := \{y \in M|\forall 0 < t_k \stackrel{k\to \infty}{\to}\infty: y \in \text{Li}_{k \to \infty} E_{t_{k}}(X) \} = \bigcap_{t_k \stackrel{k\to \infty}{\to}\infty} \text{Li}_{k \to \infty} E_{t_{k}}(X),$$
$$ E_{\infty}^{upper}(X) := \{y \in M|\forall 0 < t_k \stackrel{k\to \infty}{\to}\infty: y \in \text{Ls}_{k \to \infty} E_{t_{k}}(X) \} = \bigcap_{t_k \stackrel{k\to \infty}{\to}\infty} \text{Ls}_{k \to \infty} E_{t_{k}}(X),$$
where $0 < t_k \stackrel{k\to \infty}{\to}\infty$ denotes a monotone increasing, diverging sequence. 
\end{Def}

As a final piece of preparation, we show an important result for the Kuratowski set limits that will be extensively applied below.

\begin{Lem}\cite[Lemma~2.5.16]{Duesberg2024_master}\label{lem:equal_lims}
Let $(M,g)$ be a connected compact Riemannian mani\-fold equipped with a metric $d$ (not necessarily induced by the Riemannian metric tensor $g$). 
Let $X$ be a random variable mapping to $M$ and $f:M \to \mathbb{R}_{0}^{+}$ be continuous w.r.t. the metric $d$ on $M$. Define the sets $$M_{\delta} := \{y \in M: f(y) > \delta\},\; M_{0} := \{y \in M: f(y) = 0\}.$$
Suppose that $M_{0}$ is not empty and that the following condition holds:
$$\forall \delta > 0 \exists t_{\delta} > 0: \forall \tilde{y} \in M_{0}, \forall y\in M_{\delta}:$$
$$\mathbb{E}\left[-\ln(p(X,y,t))\right] -
\mathbb{E}\left[-\ln(p(X,\tilde{y},t))\right] > 0
\text{ for all } t \geq t_{\delta}.$$
Then $E_{\infty}^{lower}(X)$ and $E_{\infty}^{upper}(X)$ are contained in $M_{0}$.
%and $(E_{t}(X))_{t > 0}$ converges to $M_{0}$ in the sense of Ziezold.
\end{Lem}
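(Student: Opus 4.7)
The plan is to argue by contraposition: I will show that if $y \notin M_{0}$ then $y$ cannot lie in $E_{\infty}^{upper}(X)$. Since the Kuratowski lower limit is always contained in the Kuratowski upper limit, we have $E_{\infty}^{lower}(X) \subseteq E_{\infty}^{upper}(X)$ automatically, so the same containment for $E_{\infty}^{upper}(X)$ suffices.

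Fix $y \in M$ with $f(y) > 0$ and set $\delta := f(y)/2 > 0$. Continuity of $f$ with respect to $d$ yields an open neighborhood $U$ of $y$ (in the topology induced by $d$) such that $f(z) > \delta$ for every $z \in U$; in other words $U \subseteq M_{\delta}$. Because $M_{0} \ne \emptyset$, pick some $\tilde{y} \in M_{0}$. The hypothesis of the lemma supplies a threshold $t_{\delta} > 0$ such that for every $t \geq t_{\delta}$ and every $z \in M_{\delta}$
$$\mathbb{E}\bigl[-\ln p(X,z,t)\bigr] > \mathbb{E}\bigl[-\ln p(X,\tilde{y},t)\bigr].$$
Consequently no point of $M_{\delta}$ can be a minimizer of $L_{t}$ for $t \geq t_{\delta}$: any $z \in E_{t}(X)$ must satisfy $L_{t}(z) \leq L_{t}(\tilde{y})$, contradicting the strict inequality above if $z \in M_{\delta}$. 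Therefore
$$E_{t}(X) \cap M_{\delta} = \emptyset \qquad \text{for all } t \geq t_{\delta},$$
and in particular $E_{t}(X) \cap U = \emptyset$ for all $t \geq t_{\delta}$.

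Now take any monotone divergent sequence $0 < t_{k} \to \infty$. There is $k_{0}$ with $t_{k} \geq t_{\delta}$ for all $k \geq k_{0}$, so $E_{t_{k}}(X) \cap U = \emptyset$ for all $k \geq k_{0}$. If some subsequence $(z_{k})_{k \in N}$ with $z_{k} \in E_{t_{k}}(X)$ converged to $y$, then eventually $z_{k}$ would lie in the open neighborhood $U$, which is impossible. Hence $y \notin \mathrm{Ls}_{k \to \infty} E_{t_{k}}(X)$, and since this holds for every divergent $(t_{k})$ we conclude $y \notin E_{\infty}^{upper}(X)$, completing the contraposition.

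The argument is almost entirely a logical unfolding of the definitions; the hypothesis is tailored to deliver exactly the exclusion $E_{t}(X) \cap M_{\delta} = \emptyset$ after a finite time. The only nontrivial move is the passage from a pointwise condition at $y$ to a neighborhood condition, which is where continuity of $f$ (with respect to the chosen metric $d$, not necessarily the Riemannian one) is essential, and the only subtlety is keeping track of the two Kuratowski limits, which is handled by the general inclusion $\mathrm{Li} \subseteq \mathrm{Ls}$. I do not anticipate a real obstacle; compactness of $M$ is not even needed in the argument itself (it only enters indirectly, through the existence and structure of the heat kernel that defines $L_{t}$).
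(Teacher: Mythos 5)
Your proof is correct and follows essentially the same route as the paper's: the paper argues by contradiction along a convergent subsequence $y_{t_k}\to y$, using continuity of $f$ to place the $y_{t_k}$ eventually in $M_{f(y)/2}$, whereas you phrase it contrapositively via an open neighborhood $U\subseteq M_{\delta}$ of $y$, but the key mechanism --- the hypothesis together with non-emptiness of $M_0$ forces $E_t(X)\cap M_{\delta}=\emptyset$ for $t\ge t_{\delta}$ --- is identical. No gaps.
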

\begin{proof}
Note that since $E_{\infty}^{lower}(X) \subseteq E_{\infty}^{upper}(X)$, it suffices to show that $E_{\infty}^{upper}(X) \subseteq M_{0}$. 
If $E_{\infty}^{upper}(X) = \emptyset$, the statement is trivially true.\\
Assume $E_{\infty}^{upper}(X) \neq \emptyset$ and let $y \in E_{\infty}^{upper}(X)$.  
Then, by definition of  $E_{\infty}^{upper}(X)$ (see Definition~\ref{def:limits}), for any sequence 
$ 0 < t_{l} \stackrel{l\to \infty}{\to} \infty$ there exists a sequence $(y_{t_{l}})_{l}, y_{t_{l}} \in E_{t_{l}}(X)$ which has a subsequence $(y_{t_{k}})_{k}$ converging to $y$ with respect to the metric $d$. \\
Fix any sequence $0 < t_{l} \stackrel{l\to \infty}{\to} \infty$ and let  $(t_{k})_{k} \subseteq (t_{l})_{l}$ denote the subsequence for which the sequence $(y_{t_{k}})_{k}, y_{t_{k}} \in E_{t_{k}}(X)$ converges to $y$. \\
Since $f$ is continuous with respect to the metric $d$, it holds that
$$\lim_{k \to \infty} f(y_{t_{k}}) = f(y).$$
Assume that $f(y) = \delta > 0$. Then there exists a $K \in \mathbb{N}$, such that for all $k \in \mathbb{N}, k \geq K$:
$$f(y_{t_{k}}) > \frac{\delta}{2},$$
which implies that $y_{t_{k}} \in M_{\frac{\delta}{2}}$ for all $k \geq K$.\\
To $\frac{\delta}{2} > 0$ exists a $t_{\frac{\delta}{2}} > 0$, such that 
$$\forall \tilde{y} \in M_{0}, \forall y\in M_{\frac{\delta}{2}}:
\mathbb{E}\left[-\ln(p(X,y,t))\right] -\mathbb{E}\left[-\ln(p(X,\tilde{y},t))\right]
 > 0
\text{ for all } t \geq t_{\frac{\delta}{2}}.$$
Choose a $y_{t_{k}}$ such that $k \geq K$ and $t_{k} > t_{\frac{\delta}{2}}$. Then it holds that
$$ \mathbb{E}\left[-\ln(p(X,y_{t_{k}},t_{k}))\right] - \mathbb{E}\left[-\ln(p(X,\tilde{y},t_{k}))\right] > 0$$
for all $\tilde{y} \in M_{0}$, of which there exists at least one, since $M_{0}$ is assumed to be non-empty.
However, since $y_{t_{k}} \in E_{t_{k}}(X)$, by definition of the diffusion mean sets $y_{t_{k}}$ is a minimizer in $M$ of the function
$$y \to \mathbb{E}\left[-\ln(p(X,y,t_{k}))\right],$$
and therefore 
$$\mathbb{E}\left[-\ln(p(X,y_{t_{k}},t_{k}))\right] -\mathbb{E}\left[-\ln(p(X,\tilde{y},t_{k}))\right]\leq 0$$
for all $\tilde{y} \in M_{0}$.\\
This yields the contradiction
$$0 \geq \mathbb{E}\left[-\ln(p(X,y_{t_{k}},t_{k}))\right] -  \mathbb{E}\left[-\ln(p(X,\tilde{y},t_{k}))\right]> 0 \text{ for all }\tilde{y} \in M_{0}.$$
Therefore $f(y) = 0$ must hold and, in consequence, $y \in M_{0}$.
\end{proof}

\section{Convergence of Diffusion Means on Real Projective Spaces}
\label{sec:realproj}

The convergence result for the long time diffusion means requires extensive technical groundwork. We first show a bound on the Gegenbauer polynomials and use it to show that a Taylor expansion for the likelihood can be expressed in terms of convergent series. Then we use this Taylor expansion to derive an expression for the long time diffusion means which we can use to prove the desired set convergence result.

\subsection{Bounds on the Gegenbauer Polynomials}

This section explicitly derives a bound on the Gegenbauer polynomials to achieve convergence results for the relevant series derived from the heat kernel expansion below. The relevant bound is presented in Lemma~\ref{lem:Gegenbauer_bound}.

\begin{Lem}\cite[Lemma~3.2.3]{Duesberg2024_master}\label{lem:gamma_bound}
For $k \in (\mathbb{N} + \{\frac{1}{2}\})$ one has
$$ \Gamma(k) = \sqrt{\pi} \prod_{i=1}^{\lfloor k \rfloor} (k-i).$$
\end{Lem}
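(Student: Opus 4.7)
The plan is to use induction on the integer $n$ where $k = n + \tfrac{1}{2}$, together with the two standard facts about the Gamma function: the special value $\Gamma(\tfrac{1}{2}) = \sqrt{\pi}$ and the functional equation $\Gamma(x+1) = x\,\Gamma(x)$. The claim is then essentially a bookkeeping identity once these are applied repeatedly.

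For the base case, I would take the smallest admissible $k$. Under the convention that $\mathbb{N}$ contains $0$, this is $k = \tfrac{1}{2}$, where $\lfloor k \rfloor = 0$, the product on the right is empty and equals $1$, and $\Gamma(\tfrac{1}{2}) = \sqrt{\pi}$, so both sides match. If instead $\mathbb{N}$ starts at $1$, the base case is $k = \tfrac{3}{2}$, where $\Gamma(\tfrac{3}{2}) = \tfrac{1}{2}\Gamma(\tfrac{1}{2}) = \tfrac{\sqrt{\pi}}{2}$ and the right-hand side gives $\sqrt{\pi}\cdot(k-1) = \sqrt{\pi}\cdot\tfrac{1}{2}$. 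Either way the base case is immediate.

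For the inductive step, assume the formula holds for $k = n + \tfrac{1}{2}$, and consider $k' := k + 1 = n + \tfrac{3}{2}$, so $\lfloor k' \rfloor = n+1$. By the functional equation,
\begin{equation*}
\Gamma(k') \;=\; \Gamma(k+1) \;=\; k\,\Gamma(k) \;=\; (k'-1)\,\Gamma(k).
\end{equation*}
Substituting the induction hypothesis for $\Gamma(k)$ yields
\begin{equation*}
\Gamma(k') \;=\; (k'-1)\,\sqrt{\pi}\prod_{i=1}^{n}(k-i) \;=\; \sqrt{\pi}\,(k'-1)\prod_{i=1}^{n}(k'-1-i) \;=\; \sqrt{\pi}\prod_{j=1}^{n+1}(k'-j),
\end{equation*}
where the last step reindexes via $j = i+1$ (giving the factors $(k'-2),\dots,(k'-(n+1))$) and absorbs the factor $(k'-1)$ as the $j=1$ term. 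This is exactly the claimed identity for $k'$, closing the induction.

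There is no real obstacle here; the only subtle point is matching indices between $\lfloor k \rfloor = n$ and the product bound, which is resolved by writing $k' - 1 - i$ as $k' - (i+1)$ and shifting the index. No further machinery from the paper is needed.
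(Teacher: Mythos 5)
Your proposal is correct and follows essentially the same route as the paper: induction using $\Gamma(\tfrac{1}{2}) = \sqrt{\pi}$ and the functional equation $\Gamma(x+1) = x\Gamma(x)$, with the same index shift to absorb the extra factor into the product. The paper takes $k = \tfrac{3}{2}$ as the base case (so $\mathbb{N}$ starting at $1$), but the argument is otherwise identical.
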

\begin{proof}
The statement will be proven inductively.\\
\textbf{Base case} ($k= 1 + \frac{1}{2}$)\\
Since $\Gamma\left(\frac{1}{2}\right) = \sqrt{\pi}$ and $\Gamma(x+1) = x\Gamma(x)$ for all $x \in \mathbb{R}_{+}$, holds
$$\Gamma\left(1 + \frac{1}{2}\right) = \Gamma\left(\frac{1}{2}\right) \frac{1}{2}= \sqrt{\pi} \frac{1}{2} = \sqrt{\pi} \prod_{i=1}^{\lfloor 1.5 \rfloor} \left(\left(1+\frac{1}{2}\right)-i\right).$$
\textbf{Induction step}\\
Suppose the statement is true for a $(k-1) \in (\mathbb{N} + \{\frac{1}{2}\})$. Then 
\begin{align*} \Gamma(k) &= (k-1) \Gamma(k-1)= (k-1) \sqrt{\pi} \prod_{i=1}^{\lfloor k-1 \rfloor} ((k-1)-i) \\
&= \sqrt{\pi} \prod_{i=0}^{\lfloor k-1 \rfloor} ((k-1)-i)
= \sqrt{\pi} \prod_{i=0}^{\lfloor k-1 \rfloor} (k-(i+1)) \\
&= \sqrt{\pi} \prod_{i=1}^{\lfloor k-1 \rfloor+1} (k-i) =\sqrt{\pi}\prod_{i=1}^{\lfloor k \rfloor} (k-i).
\end{align*}
\end{proof}

\begin{Lem}\cite[Lemma~3.2.4]{Duesberg2024_master}\label{lem:gegenbauer_aux}
For all $k \in \big(\mathbb{N} \cup (\mathbb{N} + \{\frac{1}{2}\})\big)$:
$$ \frac{1}{4} (\lfloor k \rfloor -1)!\leq \Gamma(k) \leq 2(\lceil k \rceil -1)!.$$
\end{Lem}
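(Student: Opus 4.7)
The bound is essentially a case distinction. The integer case is trivial, so the content is in the half-integer case, and the plan is to reduce it via Lemma~\ref{lem:gamma_bound} to a term-by-term comparison of products.

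First, I would observe that if $k \in \mathbb{N}$ then $\lfloor k\rfloor = \lceil k\rceil = k$ and $\Gamma(k) = (k-1)!$, so the inequalities $\tfrac{1}{4}(k-1)! \le (k-1)! \le 2(k-1)!$ hold immediately. So the remaining case is $k = n + \tfrac{1}{2}$ for some $n \in \mathbb{N}$, where $\lfloor k\rfloor = n$ and $\lceil k\rceil = n+1$. By Lemma~\ref{lem:gamma_bound},
\begin{equation*}
    \Gamma(k) \;=\; \sqrt{\pi}\,\prod_{i=1}^{n}\!\bigl(k - i\bigr) \;=\; \sqrt{\pi}\,\bigl(n-\tfrac{1}{2}\bigr)\bigl(n-\tfrac{3}{2}\bigr)\cdots\bigl(\tfrac{1}{2}\bigr).
\end{equation*}

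For the upper bound, I would pair the $n$ factors of this product with the factors of $n! = n(n-1)\cdots 1$, noting $n - i + \tfrac{1}{2} < n - i + 1$ for $i = 1,\dots,n$. This gives $\Gamma(k) \le \sqrt{\pi}\,n!$, and since $\sqrt{\pi} < 2$ and $n! = (\lceil k\rceil - 1)!$, the upper bound follows. For the lower bound, I would split off the final factor $\tfrac{1}{2}$ and write
\begin{equation*}
    \Gamma(k) \;=\; \frac{\sqrt{\pi}}{2}\,\bigl(n-\tfrac{1}{2}\bigr)\bigl(n-\tfrac{3}{2}\bigr)\cdots\bigl(\tfrac{3}{2}\bigr),
\end{equation*}
a product of $n-1$ factors, each strictly greater than the corresponding factor of $(n-1)! = (n-1)(n-2)\cdots 1$. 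Hence $\Gamma(k) \ge \tfrac{\sqrt{\pi}}{2}(n-1)!$, and since $\tfrac{\sqrt{\pi}}{2} > \tfrac{1}{4}$ with $(n-1)! = (\lfloor k\rfloor - 1)!$, the lower bound follows as well. The edge case $n=1$ (i.e.\ $k = \tfrac{3}{2}$) needs a quick sanity check because the product over $i=1,\dots,n-1$ is empty, but then $\Gamma(\tfrac{3}{2}) = \tfrac{\sqrt{\pi}}{2} \ge \tfrac{1}{4} = \tfrac{1}{4}\cdot 0!$ directly.

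There is no real obstacle here: the only thing to be careful about is the bookkeeping of the number of factors and the off-by-one in $\lfloor k\rfloor$ versus $\lceil k\rceil$ in the half-integer case, so that the extra factor $\tfrac{1}{2}$ (which is what forces the constant $\tfrac{1}{4}$ rather than $1$) is absorbed into the prefactor rather than compared against an integer factor.
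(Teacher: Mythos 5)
Your proof is correct and follows essentially the same route as the paper's: both reduce to the half-integer case via Lemma~\ref{lem:gamma_bound}, obtain the upper bound by the term-by-term comparison $k-i \le \lceil k\rceil - i$ together with $\sqrt{\pi}\le 2$, and obtain the lower bound by splitting off the factor $\tfrac{1}{2}$, comparing the remaining factors with those of $(\lfloor k\rfloor -1)!$, and treating $k=\tfrac{3}{2}$ separately. No gaps.
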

\begin{proof}
\textit{Case 1}($k \in \mathbb{N}$):\\
$$ \frac{1}{4} (\lfloor k \rfloor -1)! \leq (\lfloor k \rfloor -1)! = (k-1)! = \Gamma(k) = (k-1)! = (\lceil  k \rceil - 1)! \leq  2(\lceil k \rceil - 1)!.$$\\
\textit{Case 2}($k \in \big(\mathbb{N} + \{\frac{1}{2}\})\big)$:\\
Using Lemma~\ref{lem:gamma_bound}, one sees that 
$$\Gamma(k) = \sqrt{\pi} \prod_{i=1}^{\lfloor k \rfloor} (k-i) \leq 2 \prod_{i=1}^{\lfloor k \rfloor}(\lceil k \rceil - i) = 2 (\lceil k \rceil -1)!$$
as well as
\begin{align*}
\Gamma(k) = \sqrt{\pi} \prod_{i=1}^{\lfloor k \rfloor} (k-i) 
&= \begin{cases}
\sqrt{\pi} \frac{1}{2}
\prod_{i=1}^{\lfloor k \rfloor - 1} (k-i)  & \text{ if } k > 1 + \frac{1}{2} \\  
\sqrt{\pi} \frac{1}{2} & \text{ if } k = 1 + \frac{1}{2} 
\end{cases} \\
&\geq \begin{cases}
\frac{1}{4} \prod_{i=1}^{\lfloor k \rfloor - 1} (\lfloor k \rfloor -i)  & \text{ if } k > 1 + \frac{1}{2}\\
\frac{1}{4} & \text{ if } k = 1 + \frac{1}{2} 
\end{cases}\\
&=\begin{cases}
\frac{1}{4} (\lfloor k \rfloor -1)! & \text{ if } k > 1 + \frac{1}{2} \\
\frac{1}{4}(\lfloor 1.5 \rfloor - 1)! & \text{ if } k = 1 + \frac{1}{2}. 
\end{cases}
\end{align*}
\end{proof}

\begin{Lem}\cite[Lemma~3.2.5]{Duesberg2024_master}\label{lem:Gegenbauer_bound}
For $m\in \mathbb{N}_{\ge 2}, l \in \mathbb{N}_{0}$ the Gegenbauer polynomials $C_{l}^{(m-1)/2}$ are bounded on the interval $[-1,1]$ with the following bounds:
$$
\left|C_{l}^{(m-1)/2}\right| \leq 
\begin{cases}
1 & \text{ if } l=0 \\
2^5(l+m-2)^{m} & \text{ if } l \in \mathbb{N}.
\end{cases}  
$$
\end{Lem}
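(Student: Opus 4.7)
For $l = 0$ the statement is immediate, since $C_0^{(m-1)/2}\equiv 1$. For $l\ge 1$, the natural starting point is the standard explicit polynomial representation
\[
C_l^{(m-1)/2}(x) = \sum_{k=0}^{\lfloor l/2\rfloor} (-1)^k\frac{\Gamma(l-k+(m-1)/2)}{\Gamma((m-1)/2)\,k!\,(l-2k)!}\,(2x)^{l-2k},
\]
whose coefficients are precisely the sort of gamma-function ratios that Lemma~\ref{lem:gegenbauer_aux} has been set up to control. The plan is to apply the triangle inequality on $[-1,1]$ (replacing $(2x)^{l-2k}$ by $2^{l-2k}$), then use Lemma~\ref{lem:gegenbauer_aux} to turn both gamma functions into factorials: $\Gamma(l-k+(m-1)/2)\le 2(\lceil l-k+(m-1)/2\rceil-1)!$ in the numerator and $\Gamma((m-1)/2)\ge \tfrac14(\lfloor(m-1)/2\rfloor-1)!$ in the denominator, with the edge case $m=2$ handled directly via $\Gamma(\tfrac12)=\sqrt\pi$.

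After this reduction each summand is a ratio of factorials times $2^{l-2k}$, and I would split the factorial ratio as
\[
\frac{(l-k+\lceil(m-1)/2\rceil-1)!}{k!\,(l-2k)!} = \frac{(l-k+\lceil(m-1)/2\rceil-1)!}{(l-k)!}\cdot \binom{l-k}{k}.
\]
The first factor is a product of at most $\lceil(m-1)/2\rceil-1\le m-1$ consecutive integers, each at most $l+m-2$, hence bounded by $(l+m-2)^{m-1}$. It then remains to control $\sum_{k=0}^{\lfloor l/2\rfloor}\binom{l-k}{k}2^{l-2k}$ and to verify that the combined product is at most $2^5(l+m-2)^m$, by absorbing one further power of $l+m-2$ and pinning down the numerical constant from the factors $2$, $\tfrac14$, and so on.

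The main obstacle is that a crude termwise triangle inequality on the alternating series overshoots the target: the $k=0$ contribution alone has size of order $2^l\binom{l+m-2}{m-2}$, which is exponential in $l$, whereas the desired bound is merely polynomial in $l+m-2$. A workable argument must therefore retain essentially all of the cancellation present in the original series. That this much cancellation really is available is confirmed by the identity $C_l^{(m-1)/2}(1)=\binom{l+m-2}{m-2}$ together with the classical maximum principle $|C_l^{\alpha}(x)|\le C_l^\alpha(1)$ for $\alpha>0$ and $x\in[-1,1]$, itself a consequence of the trigonometric expansion $C_l^\alpha(\cos\theta)=\sum_{k=0}^l\frac{\Gamma(k+\alpha)\Gamma(l-k+\alpha)}{\Gamma(\alpha)^2 k!(l-k)!}\cos((l-2k)\theta)$, whose coefficients are nonnegative. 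I therefore expect the proof either to invoke this maximum principle directly and then bound $\binom{l+m-2}{m-2}\le (l+m-2)^{m-2}\le 2^5(l+m-2)^m$, or else to regroup consecutive terms of the polynomial expansion so that the cancellations are preserved while still fitting inside the Lemma~\ref{lem:gegenbauer_aux} framework; the parity cases of that lemma's floors and ceilings provide the remaining technical friction.
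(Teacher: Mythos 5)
Your final argument is correct, but it is not the paper's. The paper handles $l=0,1$ by the explicit formulas and, for $l\ge 2$, invokes Lemma~3 of \cite{zhao}, which bounds $|C_l^{(m-1)/2}|$ by $\max\bigl(\tfrac{\Gamma(l+m-1)}{\Gamma(m-1)\Gamma(l+1)},\tfrac{\Gamma(l/2+(m-1)/2)}{\Gamma((m-1)/2)\Gamma(l/2+1)}\bigr)$, and then grinds both gamma ratios down to $2^5(l+m-2)^m$ using Lemma~\ref{lem:gegenbauer_aux}, with separate treatment of $m=2,3$ versus $m\ge 4$. You correctly diagnose that a termwise triangle inequality on the explicit polynomial representation is hopeless (exponential in $l$), and the route you then settle on — the maximum principle $|C_l^\alpha(x)|\le C_l^\alpha(1)$ for $\alpha>0$, valid here since $\alpha=(m-1)/2\ge 1/2$, followed by $C_l^{(m-1)/2}(1)=\binom{l+m-2}{m-2}\le (l+m-2)^{m-2}\le 2^5(l+m-2)^m$ — is a complete and valid proof. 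Note that your bound $\binom{l+m-2}{m-2}=\tfrac{\Gamma(l+m-1)}{\Gamma(m-1)\Gamma(l+1)}$ is exactly the first entry in the paper's max, so your argument in effect shows that the second entry and all of the floor/ceiling bookkeeping of Lemma~\ref{lem:gegenbauer_aux} are unnecessary for the stated (very loose) bound; the paper's route would only pay off if one wanted the sharper interior estimate that the second term of Zhao's lemma provides. The one caveat is that your submission spends most of its length on the abandoned dead end and presents the working argument only as one of two "expected" options; as written it reads as a proof sketch with a disclaimed false start rather than a clean proof, but the mathematical content of the surviving branch is sound.
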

\begin{proof}
By \cite{atkinson}[p.46,p.72] holds for all $x \in [-1,1], \; m \geq 2$:
$$|C_{l}^{(m-1)/2}(x)|= 
\begin{cases}
|1|  & \text{ if } l=0 \\
\left|(m-1)x\right| & \text{ if } l=1
\end{cases}
\leq
\begin{cases}
1 & \text{ if } l=0 \\
2^5(1+m-2)^m & \text{ if } l=1.
\end{cases}
$$
By Lemma 3 of \cite{zhao} for $m\geq 2, l \geq 2$:
$$ |C_{l}^{{(m-1)}/2}| \leq \max\left(\frac{\Gamma(l+m-1)}{\Gamma(m-1)\Gamma(l+1)},\frac{\Gamma(\frac{l}{2} + \frac{m-1}{2})}{\Gamma(\frac{m-1}{2})\Gamma(\frac{l}{2} +1)}\right).$$
To find a bound for the right-hand side, one can use Lemma~\ref{lem:gegenbauer_aux} and the well-known identity $\Gamma(k) = (k-1)!$ for $k \in \mathbb{N}$ to find for $m \geq 4$ and $l \geq 2$:
$$ \frac{\Gamma(l+m-1)}{\Gamma(m-1)\Gamma(l+1)} = \frac{(l+m-2)!}{(m-2)!l!} 
= \frac{(l+m-2)\cdot\cdot\cdot(l+1)}{(m-2)!}
\leq 2^5(l+m-2)^m
$$
and
$$ \frac{\Gamma(\frac{l}{2} + \frac{m-1}{2})}{\Gamma(\frac{m-1}{2})\Gamma(\frac{l}{2}+1)} 
\leq  \frac{2\lceil \frac{l+m-3}{2}\rceil!}{\frac{1}{4}\lfloor\frac{m-3}{2} \rfloor! \frac{1}{4}\lfloor\frac{l}{2}\rfloor!}
= 2^5 \frac{\lceil\frac{l+m-3}{2} \rceil\cdots(\lfloor \frac{l}{2} \rfloor + 1)}{\lfloor\frac{m-3}{2}\rfloor!}
\leq 2^5(l+m-2)^m.
$$
It remains to find bounds for the cases of $l \geq 2$ with $m=2$ or $m=3$:
$$  \frac{\Gamma(l+m-1)}{\Gamma(m-1)\Gamma(l+1)} = \frac{(l+m-2)!}{(m-2)!l!} = 
\begin{cases}
1   & \text{ if } m=2 \\
(l+1)  & \text{ if } m=3
\end{cases}
\leq 
\begin{cases}
2^5(l+2-2)^2 & \text{ if } m=2 \\
2^5(l+3-2)^3 & \text{ if } m=3,
\end{cases}
$$
and  (using Lemma~\ref{lem:gegenbauer_aux} again):
\begin{align*}
\frac{\Gamma(\frac{l}{2} + \frac{m-1}{2})}{\Gamma(\frac{m-1}{2})\Gamma(\frac{l}{2} +1)} 
= 
\begin{cases}
\frac{\Gamma(\frac{l}{2} + \frac{1}{2})}{\Gamma(\frac{1}{2})\Gamma(\frac{l}{2} +1)}& \text{ if } m=2  \\
1  & \text{ if } m=3
\end{cases}
&\leq 
\begin{cases}
\frac{1}{\sqrt{\pi}} \frac{2\lceil \frac{l}{2} -  \frac{1}{2}\rceil!}{\frac{1}{4}\lfloor \frac{l}{2} \rfloor!} & \text{ if } m=2 \\
1 & \text{ if } m=3 \\
\end{cases}\\
&\leq \begin{cases}
2^5(l+2-2)^2 & \text{ if } m=2 \\
2^5(l+3-2)^3 & \text{ if } m=3.
\end{cases}
\end{align*}
\end{proof}

\subsection{Taylor Expansion of the Likelihood}

\begin{Notation}\label{not:cmt}
To simplify notation, for any $m \in \mathbb{N}_{\ge 2}, l \in \mathbb{N}_{0}$, and $t > 0$ define the functions
\begin{align*}
    c_{m,l}^{t}: \mathbb{RP}^{m} \times \mathbb{RP}^{m} \to \mathbb{R}
\end{align*}
to be $c^{t}_{m,l}([x],[y]) := 0$ if $l$ is odd and $l = 0$, and
\begin{align*}
c^{t}_{m,l}([x],[y]) := e^{-l(l+m-1)\frac{t}{2}} \frac{2l+m-1}{m-1} C_{l}^{(m-1)/2}(\langle x,y \rangle)
\end{align*}
for even $l> 0$.
\end{Notation}

\begin{Lem}\cite[Lemma~4.2.3]{Duesberg2024_master}\label{lem:bound}
For any $m \in \mathbb{N}_{\ge 2},l \in \mathbb{N}_{0}, t > 0$ the functions
$$c_{m,l}^{t}: \mathbb{RP}^{m} \times \mathbb{RP}^{m} \to \mathbb{R}$$
from \ref{not:cmt} are bounded on $\mathbb{RP}^{m} \times \mathbb{RP}^{m}$ with the following bounds:
\[ |c_{m,l}^{t}| \leq e^{-l(l+m-1)\frac{t}{2}} 2^7 (l+m-2)^{m+1}. \]
Here, $c_{m,0}^{t} = c_{m,1}^{t} = 0$ for all $m \in \mathbb{N}_{\ge 2}$ and for all $t > 0$. \\
Moreover, for any $m \in \mathbb{N}_{\ge 2}$, any $t > 0$ and any $[x],[y] \in \mathbb{RP}^{m}$ the following equality holds:
$$\left(\sum_{l=0}^{\infty}c^{t}_{m,l}([x],[y])\right)^2 = \sum_{n=0}^{\infty}d_{m,n}^{t}([x],[y]),$$
where $d_{m,n}^{t}: \mathbb{RP}^{m} \times \mathbb{RP}^{m} \to \mathbb{R}$ is defined for any $m \in \mathbb{N}_{\ge 2}, n\in \mathbb{N}_{0}$ and $t > 0$ by
$$d_{m,n}^{t}([x],[y]) := \sum_{l=0}^{n} c_{m,l}^{t}([x],[y])c_{m,n-l}^{t}([x],[y]).$$ 
For any $m\in\mathbb{N}_{\ge 2},n \in \mathbb{N}_{0}$ and $t > 0$ the function $d_{m,n}^{t}$ is bounded on $\mathbb{RP}^{m} \times \mathbb{RP}^{m}$ with the following bounds:
$$ |d_{m,n}^{t}| \leq e^{-(\frac{n^2}{2}+n(m-1))\frac{t}{2}}2^{14} (n+m-2)^{2m+3} .$$
Here, $d_{m,0}^{t} = d_{m,1}^{t} = d_{m,2}^{t} = d_{m,3}^{t} = 0$ for all $m \in \mathbb{N}_{\ge 2}$ and for all $t > 0$.\\
Further, for any $m\in\mathbb{N}_{\ge 2},t > 0$ and $[x],[y] \in \mathbb{RP}^{m}$ the following equality holds:
$$\left(\sum_{l=0}^{\infty}c^{t}_{m,l}([x],[y])\right)^3 = \sum_{k=0}^{\infty} h_{m,k}^{t}([x],[y]),$$
where $h_{m,k}^{t}: \mathbb{RP}^{m} \times \mathbb{RP}^{m} \to \mathbb{R}$ is defined for any $m \in \mathbb{N}_{\ge 2},k \in \mathbb{N}_{0}$ and $t > 0$ by
$$h_{m,k}^{t}([x],[y]) := \sum_{n=0}^{k} d_{m,n}^{t}([x],[y])c_{m,k-n}^{t}([x],[y]).$$
For any $m \in \mathbb{N}_{\ge 2},k \in \mathbb{N}_{0}$ and $t > 0$ the function $h_{m,k}^{t}$ is bounded on $\mathbb{RP}^{m} \times \mathbb{RP}^{m}$ with the following bounds: 
$$ |h_{m,k}^{t}| \leq e^{-(\frac{k^2}{3} + k(m-1))\frac{t}{2}}2^{21}(k+m-2)^{3m+5} .$$
Here, $h_{m,0}^{t}=h_{m,1}^{t}=h_{m,2}^{t}=h_{m,3}^{t}=h_{m,4}^{t}=h_{m,5}^{t}=0$ for all $m \in \mathbb{N}_{\ge 2}$ and for all $t > 0$.
\end{Lem}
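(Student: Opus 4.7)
My plan is to establish the three bounds in sequence, using the Cauchy product formula for absolutely convergent series to pass from one to the next.

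\textbf{Bound on $c_{m,l}^t$.} I would plug the Gegenbauer estimate from Lemma~\ref{lem:Gegenbauer_bound} into the definition in Notation~\ref{not:cmt}, together with the elementary inequality $\frac{2l+m-1}{m-1} \le 3(l+m-2)$, valid for $m \ge 2$ and $l \ge 1$. Since $3 \cdot 2^5 \le 2^7$, this yields the claimed bound for $l \ge 2$; the vanishing $c_{m,0}^t = c_{m,1}^t = 0$ holds by definition.

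\textbf{Cauchy products.} The bound just derived has polynomial growth in $l$ against Gaussian decay $e^{-l(l+m-1)t/2}$, so the series $\sum_l c_{m,l}^t$ converges absolutely and uniformly on $\mathbb{RP}^m \times \mathbb{RP}^m$. The standard Cauchy product theorem then gives $\bigl(\sum_l c_{m,l}^t\bigr)^2 = \sum_n d_{m,n}^t$ and $\bigl(\sum_l c_{m,l}^t\bigr)^3 = \sum_k h_{m,k}^t$.

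\textbf{Bounds on $d_{m,n}^t$ and $h_{m,k}^t$.} For $d_{m,n}^t$, I apply the triangle inequality termwise and substitute the bound on $c$. The exponent simplifies via $l(l+m-1) + (n-l)(n-l+m-1) = l^2 + (n-l)^2 + (m-1)n$, and convexity gives $l^2 + (n-l)^2 \ge n^2/2$, producing the advertised decay rate $e^{-(n^2/2 + (m-1)n)t/2}$. The polynomial factor is bounded crudely by $(n+m-2)^{2m+2}$. For $h_{m,k}^t$ the analogous exponent $n^2/2 + (k-n)^2 + (m-1)k$ is minimized at $n = 2k/3$, yielding decay rate $k^2/3 + (m-1)k$, and the polynomial factor is bounded by $(k+m-2)^{3m+4}$.

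\textbf{Main obstacle.} The subtle point is absorbing the number of non-zero summands into an additional power of $(n+m-2)$ or $(k+m-2)$, so that the stated constants $2^{14}$ and $2^{21}$ suffice. Since $c_{m,l}^t$ vanishes unless $l$ is even and $\ge 2$, the convolution defining $d_{m,n}^t$ has at most $(n-2)/2$ non-zero terms, forcing $n$ to be even and $\ge 4$; this count is bounded by $n+m-2$ for $m \ge 2$. An entirely parallel parity argument gives at most $(k-4)/2 \le k+m-2$ non-zero terms for $h_{m,k}^t$, forcing $k$ to be even and $\ge 6$. These index restrictions simultaneously yield the stated vanishing of $d_{m,n}^t$ for $n \le 3$ and of $h_{m,k}^t$ for $k \le 5$, completing the proof.
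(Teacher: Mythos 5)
Your proposal is correct and follows essentially the same route as the paper: bound $c_{m,l}^{t}$ via Lemma~\ref{lem:Gegenbauer_bound} and a linear bound on $\frac{2l+m-1}{m-1}$ (the paper uses $4(l+m-2)$ where you use $3(l+m-2)$), form the Cauchy products from absolute convergence, minimize the exponents at $l=n/2$ and $n=2k/3$, and absorb the number of summands into one extra power of $(n+m-2)$ resp.\ $(k+m-2)$. The only cosmetic differences are that the paper justifies absolute convergence by appeal to Theorem~\ref{thm:heat_expansion} rather than your (equally valid) direct estimate, and verifies the vanishing of the low-order $d$ and $h$ terms by writing out the convolutions instead of your parity argument.
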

\begin{proof}
Using
$$p([x],[y],t) = \frac{2}{\mathcal{A}_{\mathbb{S}^m}}\left(1+\sum_{l=1}^{\infty}c_{m,l}^t([x],[y])\right),$$
one sees that the sum $\sum_{l=0}^{\infty} c_{m,l}^{t}([x],[y])$ converges absolutely for every choice of $[x],[y] \in \mathbb{RP}^{m}, t > 0$ since the heat kernel expansion does, see Theorem~\ref{thm:heat_expansion}. Therefore, the Cauchy product $\sum_{n=0}^{\infty}d_{m,n}^{t}([x],[y])$ of this sum with itself converges absolutely (see \cite{tretter}[p.63]) for any choice of $[x],[y] \in \mathbb{RP}^{m},t > 0$ to $(\sum_{l=0}^{\infty} c_{m,l}^{t}([x],[y]))^2$. The Cauchy product $\sum_{k=0}^{\infty}h_{m,k}^{t}([x],[y])$ of the absolutely convergent sums $\sum_{n=0}^{\infty}d_{m,n}^{t}([x],[y])$ and $\sum_{l=0}^{\infty} c_{m,l}^{t}([x],[y])$ converges (absolutely) to 
$$\left(\sum_{l=0}^{\infty}c^{t}_{m,l}([x],[y])\right)^2\left(\sum_{l=0}^{\infty}c^{t}_{m,l}([x],[y])\right) = \left(\sum_{l=0}^{\infty}c^{t}_{m,l}([x],[y])\right)^3.$$
The first inequality is a consequence of Lemma~\ref{lem:Gegenbauer_bound} bounding the Gegenbauer polynomials $C_{l}^{(m-1)/2}$.
\begin{align*}
\max_{x,y \in \mathbb{S}^{m}}|c_{m,l}^{t}(x,y)| &= \max_{x,y \in \mathbb{S}^{m}}\left|e^{-l(l+m-1)\frac{t}{2}} \frac{2l+m-1}{m-1}C_{l}^{(m-1)/2}(\langle x,y \rangle)\right| \\
&\leq e^{-l(l+m-1)\frac{t}{2}} \frac{2l+m-1}{m-1}2^5(l+m-2)^{m} \\
&\leq  e^{-l(l+m-1)\frac{t}{2}} 4(l+m-2) 2^5(l+m-2)^{m} \\
&\leq e^{-l(l+m-1)\frac{t}{2}}2^7 (l+m-2)^{m+1}.
\end{align*}

The bounds for $d_{m,n}^{t}$ and $h_{m,k}^{t}$ are derived for $k,n \in \mathbb{N} \setminus \{1\}$ from the bound of $c_{m,l}^{t}$ as follows (noting that always $c_{m,l}^{t} = 0$):\\
\begin{align*}
|d_{m,n}^{t}(x,y)| &= \left|\sum_{l=0}^{n} c_{m,l}^{t}(x,y)c_{m,n-l}^{t}(x,y)\right| \leq \sum_{l=1}^{n-1}|c_{m,l}^{t}(x,y)||c_{m,n-l}^{t}(x,y)|\\
&\leq n2^{14}\max_{l\in \{1,...,n-1\}}(l+m-2)^{2m+2}\max_{l\in \{1,...,n-1\}}(e^{-l(l+m-1)\frac{t}{2}}e^{-(n-l)((n-l)+m-1)\frac{t}{2}}) \\
&\leq 2^{14}(n+m-2)^{2m+2}n\max_{l\in \{1,...,n-1\}}e^{-(l^2+(n-l)^2 + n(m-1))\frac{t}{2}}\\
&\leq 2^{14}(n+m-2)^{2m+3}e^{-(\frac{n^2}{2}+n(m-1))\frac{t}{2}},
\end{align*}
where it was used again that $l^2 +(n-l)^2$ is minimized in $[0,n]$ w.r.t. $l$ by $\frac{n}{2}$,
and 
\begin{align*}
|h_{m,k}^{t}(x,y)| &= \left|\sum_{n=0}^{k} d_{m,n}^{t}(x,y)c_{m,k-n}^{t}(x,y)\right| 
\leq \sum_{n=0}^{k-1}|d_{m,n}^{t}(x,y)||c_{m,k-n}^{t}(x,y)|\\
&\leq k 2^{21} \max_{n\in \{0,...,k-1\}}(n+m-2)^{3m+4} \max_{n\in \{0,...,k-1\}} (e^{-(\frac{n^2}{2}+n(m-1))\frac{t}{2}}e^{-(k-n)((k-n)+m-1)\frac{t}{2}}) \\
&\leq 2^{21}(k+m-2)^{3m+4}k \max_{n\in \{0,...,k-1\}} e^{-(\frac{n^2}{2}+(k-n)^2 + k(m-1))\frac{t}{2}}\\
&\leq 2^{21}(k+m-2)^{3m+5} e^{-(\frac{k^2}{3}+ k(m-1))\frac{t}{2}},
\end{align*}
where it was used again that $\frac{n^2}{2} + (k-n)^2$ is minimized in $[0,k]$ w.r.t. $n$ by $\frac{2}{3}k$.\\

Further, noting that for all $m\in\mathbb{N}_{\ge 2}$ and $t > 0$ by definition holds $c_{m,0}^{t} = c_{m,1}^{t} = 0$, it holds for all $m \in \mathbb{N}_{\ge 2}$ and all $t > 0$ that:

\begin{align*}
d_{m,0}^{t} &= c_{m,0}^{t}c_{m,0}^{t} = 0 \\
d_{m,1}^{t} &= c_{m,0}^{t}c_{m,1}^{t} + c_{m,1}^{t}c_{m,0}^{t} = 0 \\
d_{m,2}^{t} &= c_{m,0}^{t}c_{m,2}^{t} + c_{m,1}^{t}c_{m,1}^{t} +
c_{m,2}^{t}c_{m,0}^{t}  = 0 \\
d_{m,3}^{t} &= 
c_{m,0}^{t}c_{m,3}^{t} + c_{m,1}^{t}c_{m,2}^{t} +
c_{m,2}^{t}c_{m,1}^{t} +
c_{m,3}^{t}c_{m,0}^{t}  = 0 \\
\end{align*}
and 
\begin{align*}
h_{m,0}^{t} &= d_{m,0}^{t}c_{m,0}^{t} = 0 \\
h_{m,1}^{t} &= d_{m,0}^{t}c_{m,1}^{t}+
d_{m,1}^{t}c_{m,0}^{t} = 0 \\
h_{m,2}^{t} &= d_{m,0}^{t}c_{m,2}^{t}+
d_{m,1}^{t}c_{m,1}^{t}+
d_{m,2}^{t}c_{m,0}^{t} = 0 \\
h_{m,3}^{t} &= d_{m,0}^{t}c_{m,3}^{t}+
d_{m,1}^{t}c_{m,2}^{t}+
d_{m,2}^{t}c_{m,1}^{t}+
d_{m,3}^{t}c_{m,0}^{t} = 0 \\
h_{m,4}^{t} &= d_{m,0}^{t}c_{m,4}^{t}+
d_{m,1}^{t}c_{m,3}^{t}+
d_{m,2}^{t}c_{m,2}^{t}+
d_{m,3}^{t}c_{m,1}^{t}+
d_{m,4}^{t}c_{m,0}^{t} = 0\\
h_{m,5}^{t} &= d_{m,0}^{t}c_{m,5}^{t}+
d_{m,1}^{t}c_{m,4}^{t}+
d_{m,2}^{t}c_{m,3}^{t}+
d_{m,3}^{t}c_{m,2}^{t}+
d_{m,4}^{t}c_{m,1}^{t}+
d_{m,5}^{t}c_{m,0}^{t} = 0.
\end{align*}
\end{proof}

\begin{Thm}\cite[Theorem~4.2.4]{Duesberg2024_master}\label{thm:taylor_ln}
Let $m \in \mathbb{N}_{\ge 2}$. Then there exists a function $g:(-1,\infty) \to \mathbb{R}$ and $t_{0} > 0, C > 0$ such that for all $t > t_{0}$ and for all $[x],[y] \in \mathbb{RP}^{m}$:
$$\ln\left(1+\sum_{l=1}^{\infty}c_{m,l}^{t}([x],[y])\right) = \sum_{l=2}^{\infty} c_{m,l}^{t}([x],[y])-\frac{1}{2}\sum_{n=4}^{\infty}d_{m,n}^{t}([x],[y]) + g\left(\sum_{l=1}^{\infty}c_{m,l}^{t}([x],[y])\right),$$
where $|g\left(\sum_{l=1}^{\infty}c_{m,l}^{t}([x],[y])\right)| < C |\sum_{k=6}^{\infty} h_{m,k}^{t}([x],[y])|$.
\end{Thm}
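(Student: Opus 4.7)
The plan is to apply the elementary Taylor expansion $\ln(1+w) = w - w^{2}/2 + g(w)$, where $g(w) := \ln(1+w) - w + w^{2}/2$, with $w = \sum_{l=1}^{\infty} c_{m,l}^{t}([x],[y])$, and then use the Cauchy product identities from Lemma~\ref{lem:bound} to rewrite $w^{2}$ and $w^{3}$ as the series $\sum d_{m,n}^{t}$ and $\sum h_{m,k}^{t}$. The prerequisite is a uniform upper bound on $|w|$ so that the Taylor expansion converges and has a controllable remainder; this is where all the actual work lies.

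First, I would establish uniform smallness of $|w|$. Lemma~\ref{lem:bound} gives $c_{m,0}^{t} = c_{m,1}^{t} = 0$ and $|c_{m,l}^{t}([x],[y])| \le 2^{7}(l+m-2)^{m+1} e^{-l(l+m-1)t/2}$ for $l \ge 2$. For $t$ large, the $l=2$ term is of order $e^{-(m+1)t}$ and subsequent terms decay super-exponentially in $l$; therefore
\[
\Bigl|\sum_{l=1}^{\infty} c_{m,l}^{t}([x],[y])\Bigr| \le \sum_{l=2}^{\infty} 2^{7}(l+m-2)^{m+1} e^{-l(l+m-1)t/2} \xrightarrow{t\to\infty} 0
\]
uniformly in $[x],[y] \in \mathbb{RP}^{m}$, so one can choose $t_{0} > 0$ such that the bound is at most $1/2$ for every $t \ge t_{0}$ and every pair $[x],[y]$.

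Second, I would handle the analytic expansion of $g$. For $|w| \le 1/2$ the series $\ln(1+w) = \sum_{k=1}^{\infty}(-1)^{k+1} w^{k}/k$ converges, so $g(w) = \sum_{k=3}^{\infty}(-1)^{k+1} w^{k}/k$ and
\[
|g(w)| \le \sum_{k=3}^{\infty}\frac{|w|^{k}}{k} \le \frac{|w|^{3}}{3(1-|w|)} \le \frac{2}{3}|w|^{3}.
\]
Setting $w = \sum_{l=1}^{\infty} c_{m,l}^{t}([x],[y])$, the Cauchy product identities from Lemma~\ref{lem:bound} yield $w^{2} = \sum_{n=0}^{\infty} d_{m,n}^{t} = \sum_{n=4}^{\infty} d_{m,n}^{t}$ and $w^{3} = \sum_{k=0}^{\infty} h_{m,k}^{t} = \sum_{k=6}^{\infty} h_{m,k}^{t}$, using the vanishing of the initial terms proved in that lemma. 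Substituting into $\ln(1+w) = w - w^{2}/2 + g(w)$ gives the claimed identity, and the remainder estimate reads $|g(w)| \le C|w|^{3} = C|w^{3}| = C\bigl|\sum_{k=6}^{\infty} h_{m,k}^{t}([x],[y])\bigr|$, with $C = 2/3$.

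The main obstacle is really confined to the first step: the uniform smallness of $|w|$. The exponential factor $e^{-l(l+m-1)t/2}$ from Lemma~\ref{lem:bound} comfortably dominates the polynomial $(l+m-2)^{m+1}$, so the argument ultimately reduces to comparison with a geometric-type series of exponentials, which is routine; every other step is a direct algebraic manipulation of absolutely convergent series already supplied by Lemma~\ref{lem:bound}.
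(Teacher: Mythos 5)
Your proposal is correct and follows essentially the same route as the paper: establish uniform smallness of $w=\sum_{l=1}^{\infty}c_{m,l}^{t}$ via the bounds of Lemma~\ref{lem:bound}, apply the Taylor expansion of $\ln(1+w)$ with remainder $g$, and substitute the Cauchy-product identities together with the vanishing of the low-order $d_{m,n}^{t}$ and $h_{m,k}^{t}$. Your version is in fact slightly more explicit, since the power-series argument yields a concrete constant $C=2/3$ where the paper only invokes a generic Taylor remainder bound.
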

\begin{proof}
The expression
$$\max_{[x],[y] \in \mathbb{RP}^{m}}\left|\sum_{l=1}^{\infty} c_{m,l}^{t}([x],[y])\right| $$
can be bounded using the bounds of $c_{m,l}^{t}$ from Lemma~\ref{lem:bound} by
$$
\max_{[x],[y] \in \mathbb{RP}^{m}}\left|\sum_{l=1}^{\infty} c_{m,l}^{t}([x],[y])\right| \leq
\sum_{l=1}^{\infty}e^{-l(l+m-1)\frac{t}{2}} 2^7 (l+m-2)^{m+1} .
$$
Using the quotient criterion for infinite sums (see \cite{tretter}[p.63]), one sees that there exist a $T > 0$ such that $\sum_{l=1}^{\infty}e^{-l^2\frac{t}{2}}2$ and $\sum_{l=1}^{\infty}e^{-l(l+m-1)\frac{t}{2}} 2^7 (l+m-2)^{m+1}$ converge for all fixed $t > T$.
Since all of their summands tend to $0$ for $t \to \infty$, so do the sums. In consequence:
$$ \max_{[x],[y] \in \mathbb{RP}^{m}}\left|\sum_{l=1}^{\infty} c_{m,l}^{t}([x],[y])\right|  
\stackrel{t \to \infty}{\longrightarrow} 0.$$
With the Taylor expansion of $\ln(1+x)$ at $x=0$ (see \cite{tretter}[p.139]) given by
$$\ln(1+x) = x-\frac{1}{2}x^2 + O(x^3),$$
for the function $$g:(-1,\infty) \to \mathbb{R},\;g(x) := \ln(1+x)-\left(x-\frac{1}{2}x^2\right)$$ there exist 
$\varepsilon > 0,C_{\varepsilon} > 0$ such that $|g(x)| \leq C_{\varepsilon} |x|^3$ for all $x \in (-1,\infty)$ with $|x| \leq \varepsilon$.
Choosing 
$t_{0}$ such that for all $t > t_{0}$ 
$$ \max_{[x],[y] \in \mathbb{RP}^{m}}\left|\sum_{l=1}^{\infty} c_{m,l}^{t}([x],[y])\right| \leq \varepsilon,$$
the Taylor expansion can be applied to yield for all $[x],[y] \in \mathbb{RP}^{m}$ and $t > t_{0}$:
$$\ln\left(1+\sum_{l=1}^{\infty} c^{t}_{m,l}([x],[y])\right) =\sum_{l=1}^{\infty} c^{t}_{m,l}([x],[y])-\frac{1}{2}\left(\sum_{l=1}^{\infty} c^{t}_{m,l}([x],[y])\right)^2 + g\left( \sum_{l=1}^{\infty} c^{t}_{m,l}([x],[y]) \right)$$
where $\left| g\left(\sum_{l=1}^{\infty} c^{t}_{m,l}([x],[y])\right)\right| < C\left|\sum_{l=1}^{\infty}c^{t}_{m,l}([x],[y])\right|^3$.\\
Plugging in the expressions for $\left(\sum_{l=0}^{\infty} c^{t}_{m,l}([x],[y])\right)^2$ and $\left(\sum_{l=0}^{\infty} c^{t}_{m,l}([x],[y])\right)^3$ from Lemma~\ref{lem:bound}
and noting that for all $t > 0$ it holds that
\begin{align*}
c_{m,0}^{t} = c_{m,1}^{t} 
= d_{m,0}^{t} = d_{m,1}^{t} = d_{m,2}^{t} = d_{m,3}^{t}  
= h_{m,0}^t = h_{m,1}^{t} = h_{m,2}^{t} = h_{m,3}^{t} = h_{m,4}^{t} = h_{m,5}^{t} = 0
\end{align*}
yields the desired equation.
\end{proof}

The usefulness of this expansion is not immediately obvious. Simply stated, it serves to separate powers of $e^{-t}$. Below, we use the fact that in the limit $t \to \infty$, only the smallest power of $e^{-t}$ contributes. From the above one can see that
\begin{align*}
    c_{m,2}^{t}([x],[y]) =&~ e^{-(m+1)t} \frac{m+3}{m-1} C_{2}^{(m-1)/2}(\langle x,y \rangle)\\
    =&~ e^{-(m+1)t} \frac{m+3}{2} \left((m+1) \langle x,y \rangle^2 - 1\right)
\end{align*}
is the lowest order term, which will therefore determine the long time limit of the diffusion means.

\subsection{Convergence of Diffusion Mean Sets}

In the following, we will prove several convergence results, so it is necessary to introduce the metric, in which these convergence results are expressed. The metrics discussed here, as well as the associated intrinsic, extrinsic and residual means have been discussed by \cite{H_meansmeans_12}.

\begin{Def}[Metrics on Spheres and Real Projective Spaces]
Let $m \in \mathbb{N}_{m\ge2 }, r > 0$. The spheres $\mathbb{S}^m(r)$ can be equipped with the
\begin{enumerate}
    \item \emph{chordal metric}, defined as $d_{\text{chord}}(x,y) := |x-y|$ in the embedding space,
    \item \emph{geodesic metric} $d_{\text{geo}}(x,y) = r \arccos(r^{-2}\langle x,y \rangle)$, defined as the length of the shortest geodesic segment connecting two points.
\end{enumerate}
The real projective space $\mathbb{RP}^{m}(r)$ can be equipped with the
\begin{enumerate}\setcounter{enumi}{2}
    \item \emph{geodesic metric} $d_{\text{geo-p}} ([x],[y]) := \min(d_{\text{geo}}(x,y), d_{\text{geo}}(x,-y))$ inherited from the sphere,
    \item \emph{residual metric} $ d_{\text{res}}([x],[y]) := \sqrt{1-\left(\left(\frac{x}{r}\right)^{T}\left(\frac{y}{r}\right)\right)^2}$.
\end{enumerate}
\end{Def}

Note that $d_{\text{chord}}$ and $d_{\text{geo}}$ are equivalent on the sphere and $d_{\text{geo-p}}$ and $d_{\text{res}}$ are equivalent on the real projective space. Furthermore, due to the connection between $d_{\text{geo}}$ and $d_{\text{geo-p}}$, any sequence $([y]_{k})_{k}$ in $\mathbb{RP}^{m}(r)$ which converges in $d_{\text{res}}$ to some $[y]$ gives rise to a convergent sequence of representatives $(y_{k} \in [y]_{k})_{k}$ in $\mathbb{S}^{m}(r)$ converging in $d_{\text{chord}}$ and $d_{\text{geo}}$ to a representative $y \in [y]$.

\begin{Thm}\cite[Theorem~4.2.5]{Duesberg2024_master}
\label{thm:diff_mean}
Let $m\in \mathbb{N}_{\ge 2}$ and let $X$ be a random variable mapping to $(\mathbb{RP}^{m},d)$, where $d$ is the residual metric. 
Then $E_{\infty}^{lower}(X)$ and $E_{\infty}^{upper}(X)$ are contained in 
$$M_{0} := \argmax_{[y] \in \mathbb{RP}^{m}}y^{T} \mathbb{E}[XX^{T}]y.$$
%and $(E_{t}(X))_{t > 0}$ converges to $M_{0}$ in the sense of Ziezold.\\
Let $D$ denote the intersection of $\mathbb{S}^{m}$ with the eigenspace of the largest  eigenvalue of $\mathbb{E}[XX^{T}]$. Then 
$$M_{0} = D/\sim $$
where
$$ x \sim y :\Leftrightarrow x = y \lor x = -y.$$
If the largest eigenvalue of $\mathbb{E}[XX^{T}]$ has multiplicity $1$, it holds that:
$$M_{0} = E_{\infty}^{lower}(X) = E_{\infty}^{upper}(X).$$
\end{Thm}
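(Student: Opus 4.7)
My plan is to invoke Lemma~\ref{lem:equal_lims} with the nonnegative continuous function
\[
f([y]) := \lambda_{\max} - y^{T}\mathbb{E}[XX^{T}]y, \qquad \lambda_{\max} := \max_{[z] \in \mathbb{RP}^{m}} z^{T}\mathbb{E}[XX^{T}]z.
\]
This $f$ descends well to $\mathbb{RP}^{m}$ because the expression is invariant under $y \mapsto -y$; it is continuous in $d_{\mathrm{res}}$ because convergence in the residual metric always allows a choice of unit representatives converging in Euclidean distance on $\mathbb{S}^{m}$; and by construction $M_{0} = \{f = 0\}$ is nonempty and matches the set in the theorem statement.

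\textbf{Isolating the leading term.} Writing the heat kernel as in Theorem~\ref{thm:heat_kernel_rp} and Notation~\ref{not:cmt},
\[
p([X],[y],t) = \frac{2}{\mathcal{A}_{\mathbb{S}^{m}}}\Bigl(1+\sum_{l=1}^{\infty}c^{t}_{m,l}(X,y)\Bigr),
\]
Theorem~\ref{thm:taylor_ln} yields for all sufficiently large $t$
\[
-\ln p(X,y,t) = \mathrm{const} - \sum_{l=2}^{\infty}c^{t}_{m,l}(X,y) + \tfrac{1}{2}\sum_{n=4}^{\infty}d^{t}_{m,n}(X,y) - g\Bigl(\sum_{l=1}^{\infty} c^{t}_{m,l}(X,y)\Bigr).
\]
Using the closed form $c^{t}_{m,2}(X,y) = e^{-(m+1)t}\tfrac{m+3}{2}\bigl((m+1)\langle X,y\rangle^{2}-1\bigr)$ and exchanging expectation with the uniformly convergent series (justified by the bounds of Lemma~\ref{lem:bound}), the $l=2$ contribution to $\mathbb{E}[-\ln p(X,y,t)] - \mathbb{E}[-\ln p(X,\tilde y,t)]$ for $\tilde y \in M_{0}$ and $y \in M_{\delta}$ is
\[
e^{-(m+1)t}\,\tfrac{(m+3)(m+1)}{2}\bigl(\tilde y^{T}\mathbb{E}[XX^{T}]\tilde y - y^{T}\mathbb{E}[XX^{T}]y\bigr) \;>\; e^{-(m+1)t}\,\tfrac{(m+3)(m+1)}{2}\,\delta.
\]

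\textbf{Dominating the remainder.} Lemma~\ref{lem:bound} shows that the slowest-decaying terms outside the $l=2$ contribution have exponents $2(m+3)$ from $c^{t}_{m,4}$, $2(m+1)$ from $d^{t}_{m,4}$, and $3(m+1)$ from $h^{t}_{m,6}$ (the $g$-remainder is controlled by $|g(x)|\le C|x|^{3}$ and the Cauchy-product identity $(\sum c)^{3} = \sum h$ from Lemma~\ref{lem:bound}). All three exponents strictly exceed $m+1$ for $m \geq 2$, and the polynomial-in-index prefactors are absorbed by the Gaussian-in-index decay, giving tails bounded by $C_{m}e^{-\alpha t}$ with $\alpha > m+1$. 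Picking $t_{\delta}$ so that the $l=2$ leading term dominates uniformly in $\tilde y \in M_{0}$, $y \in M_{\delta}$, the hypothesis of Lemma~\ref{lem:equal_lims} holds, giving $E_{\infty}^{lower}(X), E_{\infty}^{upper}(X) \subseteq M_{0}$.

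\textbf{Identification of $M_0$ and the simple-eigenvalue case.} The maximum of the quadratic form $y^{T}\mathbb{E}[XX^{T}]y$ on $\mathbb{S}^{m}$ is attained precisely on the top eigenspace $D$ by standard spectral theory, so $M_{0} = D/\!\sim$. When the top eigenvalue is simple, $D=\{\pm v\}$ and $M_{0}=\{[v]\}$ is a singleton; by compactness of $\mathbb{RP}^{m}$, any divergent sequence $(t_{k})$ and any choice $y_{t_{k}} \in E_{t_{k}}(X)$ admit a convergent subsequence, whose limit necessarily lies in $M_{0}=\{[v]\}$ by the inclusion just proved, so the full sequence converges to $[v]$. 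This gives $[v] \in E_{\infty}^{lower}(X)$ and hence equality with $M_{0}$. The main technical obstacle throughout is the bookkeeping of exponential scales: one must verify that all three remainder sources (higher $c$-modes, the $d$-series, and the cubic Taylor residual written via $h$) decay strictly faster than the $e^{-(m+1)t}$ leading term uniformly in $([x],[y])$, a verification that rides entirely on the explicit bounds assembled in Lemma~\ref{lem:bound}.
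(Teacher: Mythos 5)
Your proposal is correct and follows essentially the same route as the paper: the same choice of $f$, the same verification of the hypothesis of Lemma~\ref{lem:equal_lims} by isolating the $c_{m,2}^{t}$ term of order $e^{-(m+1)t}$ and checking that the residual $c$-, $d$-, and $h$-series decay with strictly larger exponents (your explicit values $2(m+3)$, $2(m+1)$, $3(m+1)$ match the paper's bounds), and the same compactness argument for the simple-eigenvalue case. One minor imprecision: in the last step you justify that a subsequential limit of minimizers lies in $M_{0}$ ``by the inclusion just proved,'' but $E_{\infty}^{upper}(X)$ is an intersection over all divergent sequences, so you should instead appeal to the fact that the verified condition is uniform in $t$ and hence forces $f([y]_{t_{k}})\to 0$ along the fixed sequence (as the paper does), which then yields the per-sequence conclusion.
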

\begin{proof}
The function defined by 
$$ y \to y^T\mathbb{E}[XX^T]y$$
is continuous in $\mathbb{R}^{m+1}$ and, since the set $\mathbb{S}^{m} \subset \mathbb{R}^{m+1}$ is compact in $\mathbb{R}^{m+1}$, it admits a maximum on $\mathbb{S}^{m}$. This implies that the expression
$$\max_{z \in \mathbb{S}^{m}} z^T\mathbb{E}[XX^T]z$$
is well-defined, and since the function $y \to y^T\mathbb{E}[XX^T]y$ is even on $\mathbb{S}^{m}$, the expression
$$ \max_{[z] \in \mathbb{RP}^{m}} z^T\mathbb{E}[XX^T]z.$$
is well-defined too. \\
The set $M_{0} = \argmax_{[y] \in \mathbb{RP}^{m}}y^{T} \mathbb{E}[XX^{T}]y $ can equivalently be defined as the set of zero points of a function $f$ where 
$$f:\mathbb{RP}^{m} \to \mathbb{R}_{0}^{+}, 
f([y]) := \max_{[z] \in \mathbb{RP}^{m}} z^{T} \mathbb{E}[XX^{T}]z - y^{T}\mathbb{E}[XX^{T}]y$$
and 
$$M_{0} = \{[y] \in \mathbb{RP}^{m}:f([y]) = 0\}.$$
Define further for any $\delta > 0$ the set $M_{\delta}$ as follows:
$$M_{\delta} := \{[y] \in \mathbb{RP}^{m}:f([y]) > \delta\}.$$
In this setting, to prove that $E^{lower}_{\infty}(X)$ and $E^{upper}_{\infty}(X)$ are contained in $M_{0}$, it suffices to show that the conditions of Lemma~\ref{lem:equal_lims} are satisified for the function $f$. So it needs to be shown that the function $f$ is continuous with respect to  the residual metric on $\mathbb{RP}^{m}$, that the set $M_{0}$ is not empty and that the following condition holds:

$$ \forall \delta > 0 \exists t_{\delta} > 0: \forall [\tilde{y}] \in M_{0}, [y] \in M_{\delta}:$$
$$ \mathbb{E}\left[-\ln\left(p(X,[y],t)\right)\right] - \mathbb{E}\left[-\ln\left(p(X,[\tilde{y}],t)\right)\right] > 0 \text{ for all } t \geq t_{\delta}.$$

The function $f$ is continuous with respect to the residual metric by the following argument:\\
Let $([y]_{k})_{k}\subseteq  \mathbb{RP}^{m}$ be a sequence converging to $[y] \in \mathbb{RP}^{m}$ with respect to the residual metric. 
%By Lemma~\ref{lem:res_conv_geo_conv}, 
This implies the existence of a sequence of representatives $y_{k}$ of $[y]_{k}$ and a representative $y$ of $[y]$, such that the sequence $(y_{k})_{k}$ converges to $y$ with respect to the geodesic metric on $\mathbb{S}^{m}$. This in turn implies that $(y_{k})_{k}$ converges to $y$ with respect to the chordal metric on $\mathbb{S}^{m}$ too.% by Lemma~\ref{lem:geochor}.
Therefore it holds that:
\begin{align*} \lim_{k \to \infty} f([y]_{k}) &=  \max_{[z] \in \mathbb{RP}^{m}} z^{T} \mathbb{E}[XX^{T}] z - \lim_{k \to \infty}(y_{k})^{T}\mathbb{E}[XX^{T}]y_{k} \\
&= \max_{[z] \in \mathbb{RP}^{m}} z^{T} \mathbb{E}[XX^{T}]z - y^{T}\mathbb{E}[XX^{T}]y \\
&= f([y]),
\end{align*}
where it was used for the second equality that the function $z \to z^T\mathbb{E}[XX^{T}]z$ is continuous in $\mathbb{S}^{m} \subset \mathbb{RP}^{m+1}$ with respect to the Euclidean (=chordal) metric.\\
To see that $M_{0}$ is not empty, note that 
$$\argmax_{y \in \mathbb{S}^{m}} y^T\mathbb{E}[XX^T]y$$
is not empty, because the continuous function $y \to y^T\mathbb{E}[XX^T]y$ attains a maximum on the compact set $\mathbb{S}^{m}$. 
Because of the evenness of the function $y \to y^T\mathbb{E}[XX^T]y$, if $y$ is a maximizer of it in $\mathbb{S}^m$, so is $-y$. This shows that
$$\argmax_{[y] \in \mathbb{RP}^{m}} y^T\mathbb{E}[XX^T]y
= \{[y] \in \mathbb{RP}^{m}| y \in \argmax_{y \in \mathbb{S}^m} y^T\mathbb{E}[XX^T]y\} $$
is not empty.\\

Having shown that $f$ is continuous with respect to the residual metric on $\mathbb{RP}^{m}$ and that $M_{0}$ is not empty, to apply Lemma~\ref{lem:equal_lims} it only remains to be checked that the following condition holds:
$$ \forall \delta > 0 \exists t_{\delta} > 0: \forall [\tilde{y}] \in M_{0}, [y] \in M_{\delta}:$$
$$ \mathbb{E}\left[-\ln\left(1 + \sum_{l=1}^{\infty} c_{m,l}^{t}(X,[y])\right)\right] - \mathbb{E}\left[-\ln\left(1 + \sum_{l=1}^{\infty} c_{m,l}^{t}(X,[\tilde{y}])\right)\right] > 0 \text{ for all } t \geq t_{\delta},$$
where it was used that
$$p([x],[y],t) = \frac{2}{\mathcal{A}_{\mathbb{S}^{m}}} (1 + \sum_{l=1}^{\infty}c_{m,l}^{t}([x],[y]))$$
with $c_{m,l}^{t}$ as defined in \ref{not:cmt}.

Plugging in the Taylor expansion from Theorem~\ref{thm:taylor_ln} for the expression $\ln(1+\sum_{l=1}^{\infty}c_{m,l}^{t}([x],[y]))$, this condition is equivalent to the following condition:
$$\forall \delta > 0 \exists t_{\delta} > 0: \forall [\tilde{y}] \in M_{0}, [y] \in M_{\delta}:$$
\begin{align*}
&\mathbb{E}\left[ \sum_{l=2}^{\infty} (c^{t}_{m,l}(X,[\tilde{y}]) -c^{t}_{m,l}(X,[y]))\right] - 
\frac{1}{2}\mathbb{E}\left[ \sum_{n=4}^{\infty}(d_{m,n}^{t}(X,[\tilde{y}]) - d_{m,n}^{t}(X,[y]))\right]\\
&+\mathbb{E}\left[g\left(\sum_{l=1}^{\infty} c^{t}_{m,l}(X,[\tilde{y}])\right) -g\left(\sum_{l=1}^{\infty} c^{t}_{m,l}(X,[y])\right)\right] > 0 \text{ for all } t \geq t_{\delta}.
\end{align*}
where $g:(-1,\infty) \to \mathbb{R},\; g(z) := \ln(1+z)-\left(z-\frac{1}{2}z^2\right)$.\\
Using the fact that (see \cite{atkinson}[p.46,p.72]) for $m \in \mathbb{N}_{\ge 2}, z \in [-1,1]$:
\begin{align*}
C_{0}^{(m-1)/2}(z) &= 1 \\
C_{1}^{(m-1)/2}(z) &= (m-1)z \\
C_{2}^{(m-1)/2}(z) &= \frac{m+1}{2}zC_{1}^{(m-1)/2}(z) - \frac{m-1}{2}C_{0}^{(m-1)/2}(z) \\
&=  \frac{(m+1)(m-1)}{2}z^2 - \frac{m-1}{2}
\end{align*}
and that for $z \in [-1,1]$
\begin{align*}
\cos(2\arccos(z)) = 2\cos^2(z) -1,
\end{align*}
for any $\delta > 0, [\tilde{y}] \in M_{0}, [y] \in M_{\delta}$:
\begin{align*}
&\mathbb{E}[c_{m,2}^{t}(X,[\tilde{y}])-c_{m,2}^t(X,[y])] \\
&= \mathbb{E}[e^{-(m+1)t}\frac{m+3}{m-1}\big(C_{2}^{(m-1)/2}(\langle X,\tilde{y} \rangle) - C_{2}^{(m-1)/2}( \langle X,y \rangle) \big)]\\
&=\frac{(m+3)(m+1)}{2}e^{-(m+1)t}(\mathbb{E}[\langle X,\tilde{y}\rangle^2] -\mathbb{E}[\langle X,y\rangle^2])  \\
&= \frac{(m+3)(m+1)}{2}e^{-(m+1)t} (\tilde{y}^T\mathbb{E}[XX^T]\tilde{y} - y^T\mathbb{E}[XX^T]y)\\
&=  \frac{(m+3)(m+1)}{2}e^{-(m+1)t}f([y]),
\end{align*}
where it was used that for $[\tilde{y}] \in M_{0}$ it holds that
$$ \tilde{y}^T\mathbb{E}[XX^T]\tilde{y} = \max_{[z] \in \mathbb{RP}^{m}}z^T\mathbb{E}[XX^T]z.$$
So the condition is equivalent to the following condition:
$$ \forall \delta > 0 \exists t_{\delta} > 0: \forall [\tilde{y}] \in M_{0}, [y] \in M_{\delta}: $$
\begin{align*} 
&\mathbb{E}\left[ \sum_{l=3}^{\infty} (c^{t}_{m,l}(X,[y]) -c^{t}_{m,l}(X,[\tilde{y}]))\right] +
\frac{1}{2}\mathbb{E}\left[ \sum_{n=4}^{\infty}d_{m,n}^{t}(X,[\tilde{y}]) -\sum_{n=4}^{\infty}d_{m,n}^{t}(X,[y])\right] \\
&+\mathbb{E}\left[ g\left(\sum_{l=1}^{\infty} c^{t}_{m,l}(X,[y])\right) -g\left(\sum_{l=1}^{\infty} c^{t}_{m,l}(X,[\tilde{y}])\right)\right] \\
&< \frac{(m+3)(m+1)}{2}e^{-(m+1)t}f([y])\;\;
\text{ for all } t \geq t_{\delta}.
\end{align*}
By \ref{thm:taylor_ln} there exist $t_{0} >0, C > 0$, such that for all $t > t_{0}$
$$\left|g\left(\sum_{l=1}^{\infty}c_{m,l}^{t}([x],[y])\right)\right| < C \left|\sum_{k=6}^{\infty} h_{m,k}^{t}([x],[y])\right| \text{ for all } [x],[y] \in \mathbb{RP}^{m}.$$
Using the bounds from \ref{lem:bound} for the expressions $c_{m,l}^{t}, d_{m,l}^{t}$ and $h_{m,l}^{t}$, for all $t > t_{0}$
\begin{align*}
&\mathbb{E}\left[ \sum_{l=3}^{\infty} (c^{t}_{m,l}(X,[y]) -c^{t}_{m,l}(X,[\tilde{y}]))\right] +
\frac{1}{2}\mathbb{E}\left[ \sum_{n=4}^{\infty}d_{m,n}^{t}(X,[\tilde{y}]) -\sum_{n=4}^{\infty}d_{m,n}^{t}(X,[y])\right]\\
&+\mathbb{E}\left[ g\left(\sum_{l=1}^{\infty} c^{t}_{m,l}(X,[y])\right) -g\left(\sum_{l=1}^{\infty} c^{t}_{m,l}(X,[\tilde{y}])\right)\right]
\end{align*}
can be bounded by 
\begin{align*}
&2 \sum_{l=3}^{\infty} e^{-l(l+m-1)\frac{t}{2}} 2^7 (l+m-2)^{m+1} + \sum_{n=4}^{\infty} e^{-(\frac{n^2}{2}+n(m-1))\frac{t}{2}}2^{14} (n+m-2)^{2m+3}\\
&+ 2 C \sum_{k=6}^{\infty}e^{-(\frac{k^2}{3} + k(m-1))\frac{t}{2}}2^{21}(k+m-2)^{3m+5} .
\end{align*}
In consequence, the condition is satisfied if the following, stronger condition holds:
$$ \forall \delta > 0 \exists t_{\delta}: \forall [y] \in M_{\delta} \forall t \geq t_{\delta}: $$
\begin{align*}
&2 \sum_{l=3}^{\infty} e^{-l(l+m-1)\frac{t}{2}} 2^7 (l+m-2)^{m+1} + \sum_{n=4}^{\infty} e^{-(\frac{n^2}{2}+n(m-1))\frac{t}{2}}2^{14} (n+m-2)^{2m+3}\\
&+ 2 C\sum_{k=6}^{\infty}e^{-(\frac{k^2}{3} + k(m-1))\frac{t}{2}}2^{21}(k+m-2)^{3m+5}
<  e^{-(m+1)t}\frac{(m+3)(m+1)}{2}f([y]).
\end{align*}
Applying the quotient criterion for infinite sums (see \cite{tretter}[p.59]) one sees that there exists a $t_{conv} > t_{0}$, such that for all choices of $t \geq t_{conv}$ all the involved sums converge.\\
In consequence, for any fixed $t \geq t_{conv}$ the inequalities can be multiplied by the factor $e^{(m+1)t}$. Multiplying the inequalities with this factor and noting that for all $[y] \in M_{\delta}$ holds $f([y]) > \delta$, one sees that the previous conditions hold if the following, stronger condition holds:
$$\forall \delta > 0 \exists t_{\delta} > 0: \forall t \geq \max\{t_{\delta},t_{conv}\}:$$
\begin{align*}
\frac{2}{(m+3)(m+1)}&\Big(2 \sum_{l=3}^{\infty} e^{-(l(l+m-1)-2(m+1))\frac{t}{2}} 2^7 (l+m-2)^{m+1} \\
&+\sum_{n=4}^{\infty} e^{-(\frac{n^2}{2}+n(m-1)-2(m+1))\frac{t}{2}}2^{14} (n+m-2)^{2m+3} \\
&+ 2 C\sum_{k=6}^{\infty}e^{-(\frac{k^2}{3} + k(m-1)-2(m+1))\frac{t}{2}}2^{21}(k+m-2)^{3m+5}\Big) < \delta.
\end{align*}
Since for $t \to \infty$ all sums involved converge to $0$ (to see this, it suffices to verify that the factors of $\frac{t}{2}$ in the exponent of $e$ are negative for all indices), the condition is satisfied in both cases.
Therefore, Lemma~\ref{lem:equal_lims} can be applied to yield that $E^{lower}_{\infty}(X)$ and $E^{upper}_{\infty}(X)$ are contained in $M_{0}$.\\

For the second part of the Theorem, note that:
$$y \in \argmax_{y \in \mathbb{S}^{m}}y^{T} \mathbb{E}[XX^{T}]y$$
$$\Longleftrightarrow$$
$$y\in\mathbb{S}^{m} \text{ is an eigenvector to the largest eigenvalue of } \mathbb{E}[XX^{T}]$$
$$\Longleftrightarrow$$
$$-y\in\mathbb{S}^{m} \text{ is an eigenvector to the largest eigenvalue of } \mathbb{E}[XX^{T}].$$
Denote the eigenspace of the largest eigenvalue of $\mathbb{E}[XX^T]$  intersected with $\mathbb{S}^{m}$ by $D$. Then:
\begin{align*}
D/\sim 
&= \{[y] \in \mathbb{RP}^{m}| y \in D\} \\
&= \{[y] \in \mathbb{RP}^{m}| y \in \argmax_{y \in \mathbb{S}^{m}}y^{T} \mathbb{E}[XX^{T}]y \} \\
&= \argmax_{[y] \in \mathbb{RP}^{m}} y^{T}\mathbb{E}[XX^{T}]y = M_{0}.
\end{align*}

Let $\lambda$ denote the largest eigenvalue of $\mathbb{E}[XX^{T}]$. Since $\mathbb{E}[XX^{T}]$ is symmetric and positive semi-definite, $\lambda$ is a non-negative real number, and its algebraic and geometric multiplicity coincide. If the geometric multiplicity of $\lambda$ is $1$, the eigenspace to $\lambda$ is a one-dimensional subset in $\mathbb{R}^{m+1}$, therefore the set $D$ can only contain two elements $y_{\infty},-y_{\infty} \in \mathbb{S}^{m}$. In consequence, $$M_{0} = D / \sim = \{[y_{\infty}]\}.$$
Assume now that $\lambda$ has multiplicity $1$.
It was already established that $E^{lower}_{\infty}(X) \subseteq E^{upper}_{\infty}(X) \subseteq M_{0}$.
So to show equality of $E^{lower}_{\infty}(X)$, $E^{upper}_{\infty}(X)$ and $M_{0}$, it suffices to show that the sole element $[y_{\infty}]$ in $M_{0}$ is also contained in $E^{lower}(X)$.\\
This in turn is the case, if for any monotone increasing sequences $0 < t_{k} \stackrel{k \to \infty}{\to} \infty$ and corresponding sequences $([y]_{t_{k}})_{k}, [y]_{t_{k}} \in E_{t_{k}}(X)$ the limit $\lim_{k \to \infty} [y]_{t_{k}}$ is given by $[y_{\infty}]$. This will be shown in the following:\\

Fix a sequence $0 < t_{k} \stackrel{k \to \infty}{\to} \infty$ and build  $([y]_{t_{k}})_{k}$ by arbitrarily choosing an element from each set $E_{t_{k}}(X)$. Consider the sequence $f([y]_{t_{k}})_{k}$. It has been shown already that the conditions of Lemma~\ref{lem:equal_lims} hold for $f$. This implies that
$$ \forall \delta > 0 \exists t_{\delta} > 0: \forall [\tilde{y}] \in M_{0} \forall [y] \in M_{\delta}:$$
$$  \mathbb{E}\left[-\ln\left(p(X,[y],t\right)\right]- \mathbb{E}\left[-\ln\left(p(X,[\tilde{y}],t)\right)\right]  > 0 \text{ for all } t \geq t_{\delta}.$$
This means that to any $\delta > 0$ exists a $t_{\delta} > 0$ such that the set $M_{\delta}$ contains no minimizers of $\mathbb{E}\left[-\ln\left(p(X,[z],t)\right)\right]$ (w.r.t. $[z]$) for $t \geq t_{\delta}$.\\
However, to any $\delta > 0$ exists a $K \in \mathbb{N}$ such that $t_{k} > t_{\delta}$ for all $k \geq K\;(k \in \mathbb{N})$, and any $[y]_{t_{k}} \in E_{t_{k}}(X)$ is by definition of the diffusion $t$-mean sets a minimizer in $\mathbb{RP}^{m}$ of the function
$$[z] \to \mathbb{E}\left[-\ln\left(p(X,[z],t_{k})\right)\right].$$
Therefore, the following statement holds too:
$$\forall \delta > 0 \exists K \in \mathbb{N}\forall k \geq K\;(k \in \mathbb{N}): [y]_{t_{k}} \notin M_{\delta},$$
and this statement is equivalent to the statement
$$\forall \delta > 0 \exists K \in \mathbb{N}\forall k \geq K\;(k \in \mathbb{N}): f([y]_{t_{k}}) \leq \delta,$$
implying convergence of the sequence $(f([y]_{t_{k}}))_{k}$ to $0$.\\
Since $f([y_{\infty}]) = 0$, this means that
$$ \lim_{k \to \infty} f([y]_{t_{k}}) = f([y_{\infty}]).$$
Let $([y]_{t_{l}})_{l}$ be a subsequence of $([y]_{t_{k}})_{k}$. Consider a sequence $(y_{t_{l}})_{l} \subseteq \mathbb{S}^{m}$ of representatives of $[y]_{t_{l}}$. Since $\mathbb{S}^{m}$ is compact, there exists by Bolzano-Weierstraß' Theorem (see \cite{rudin}[p.46]) a convergent (w.r.t. the chordal metric) subsequence of $(y_{t_{l}})_{l}$, denoted $(y_{t_{h}})_{h}$, with limit $\hat{y} \in \mathbb{S}^{m}$. Then
\begin{align*}
\max_{[z] \in \mathbb{RP}^{m}} z^{T} \mathbb{E}[XX^{T}]z -  y_{\infty}^{T}\mathbb{E}[XX^{T}]y_{\infty} &=  f([y_{\infty}]) = \lim_{k \to \infty} f([y]_{t_{k}}) = \lim_{h \to \infty}f([y]_{t_{h}}) \\
&=\lim_{h \to \infty}\left( \max_{[z] \in \mathbb{RP}^{m}} z^{T} \mathbb{E}[XX^{T}]z - (y_{t_{h}})^{T}\mathbb{E}[XX^{T}]y_{t_{h}}\right) \\
&= \max_{[z] \in \mathbb{RP}^{m}} z^{T} \mathbb{E}[XX^{T}]z - \lim_{h \to \infty} y_{t_{h}}^{T}\mathbb{E}[XX^{T}]y_{t_{h}}\\
&= \max_{[z] \in \mathbb{RP}^{m}} z^{T} \mathbb{E}[XX^{T}]z -  \hat{y}\mathbb{E}[XX^{T}]\hat{y},
\end{align*}
from which follows
$$ \hat{y}\mathbb{E}[XX^{T}]\hat{y} = y_{\infty}^{T}\mathbb{E}[XX^{T}]y_{\infty},$$
and therefore $\hat{y} = y_{\infty} \lor -y_{\infty}$, since $[y_{\infty}]$ is the only element in $M_{0}$.
This means that the sequence $(y_{t_{h}})_{h}$ converges to $y_{\infty}$ or to $-y_{\infty}$ with respect to the chordal metric, and this in turn implies that $([y]_{t_{h}})_{h}$ converges with respect to the residual metric to $[y_{\infty}]$. (See Remark~\ref{rmk:chor_to_res})\\
Since the subsequence $([y]_{t_{l}})_{l}$ of $([y]_{t_{k}})_{k}$ was chosen arbitrarily, this means that any subsequence of $([y]_{t_{k}})_{k}$ has a subsequence that converges to $[y_{\infty}]$ with respect to the residual metric, and this statement implies convergence of $([y]_{t_{k}})_{k}$ to $[y_{\infty}]$ with respect to the residual metric.
\end{proof}

\begin{Rm}\label{rmk:chor_to_res}
It was used in the proof of Theorem~\ref{thm:diff_mean} that convergence of a sequence $(x_{k})_{k} \subseteq \mathbb{S}^{m}\; (m \in \mathbb{N}_{\ge 2})$ to $\hat{x} \in \mathbb{S}^{m}$ with respect to the chordal metric implies convergence of the sequence $([x_{k}])_{k} \subseteq \mathbb{RP}^{m}$ to $[\hat{x}] \in \mathbb{RP}^{m}$ with respect to the residual metric. This can be seen by the following argument:\\
The function 
$$f_{\hat{x}}:\mathbb{S}^{m} \to \mathbb{R},\;f_{\hat{x}}(x) := \sqrt{1-(x^T\hat{x})^2}$$
is continuous with respect to the chordal (= Euclidean) metric. In consequence:
$$\lim_{k \to \infty} d_{res}([x_{k}],[\hat{x}]) 
= \lim_{k \to \infty} f_{\hat{x}}(x_{k}) 
= f_{\hat{x}}(\lim_{k \to \infty} x_{k})  
= f_{\hat{x}}(\hat{x}) = 0.$$
\end{Rm}

The result above holds for radius $r=1$ and states that the long time diffusion limit reduces the expected logarithmic heat kernel to $a y^T \mathbb{E}[XX^{T}] y + b$ with constants $a$ and $b$ and thus the long diffusion time limit of the diffusion means can be defined as $ y_{\infty}^T\mathbb{E}[XX^T]y_{\infty} = \max_{[z] \in \mathbb{RP}^{m}}z^T\mathbb{E}[XX^T]z$. Next, we extend the result to arbitrary radius $r$.

\begin{Thm}\cite[Theorem~4.2.7]{Duesberg2024_master}\label{thm:diff_mean_sets_r}
Let $m \in \mathbb{N}_{\ge 2}, r > 0$ and let $X$ be random variable mapping to $\mathbb{RP}^{m}(r)$. Then:
$$E_{t}(X) = rE_{\frac{t}{r^2}}\left(\frac{X}{r}\right),$$
i.e., $E_{t}(X) = \{[y]\in\mathbb{RP}^{m}(r)| \left[\frac{y}{r}\right] \in E_{\frac{t}{r^2}}\left(\frac{X}{r}\right) \}$.
\end{Thm}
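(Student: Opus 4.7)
The plan is to reduce everything to the unit radius case $r=1$ by direct scaling of the heat kernel expansion given in Theorem~\ref{thm:heat_kernel_rp}. Writing $p_r$ for the heat kernel of $\mathbb{RP}^m(r)$ and $p_1$ for that of $\mathbb{RP}^m = \mathbb{RP}^m(1)$, I would first verify the scaling identity
\begin{equation*}
p_r([x],[y],t) \;=\; \frac{1}{r^m}\, p_1\!\left(\left[\tfrac{x}{r}\right], \left[\tfrac{y}{r}\right], \tfrac{t}{r^2}\right)
\end{equation*}
by a term-by-term comparison of the two series in Theorem~\ref{thm:heat_kernel_rp}. The prefactor $r^{-m}$ is exactly what comes out of the normalization $2/(r^m\mathcal{A}_{\mathbb{S}^m})$ relative to $2/\mathcal{A}_{\mathbb{S}^m}$; the exponent $l(l+m-1)t/(2r^2)$ equals $l(l+m-1)s/2$ for $s=t/r^2$; and the argument $\langle x/r,y/r\rangle$ of the Gegenbauer polynomial is already in the rescaled form. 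Every term matches, and the conclusion of Theorem~\ref{thm:heat_kernel_rp} guarantees absolute convergence throughout, so rearrangement of factors causes no trouble.

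Next, taking logarithms and expectations gives
\begin{equation*}
\mathbb{E}\!\left[-\ln p_r(X,[y],t)\right] \;=\; m\ln r \;+\; \mathbb{E}\!\left[-\ln p_1\!\left(\tfrac{X}{r},\left[\tfrac{y}{r}\right],\tfrac{t}{r^2}\right)\right].
\end{equation*}
The additive constant $m\ln r$ is independent of $y$, hence irrelevant for the argmin. The scaling map $\phi:\mathbb{RP}^m(r)\to\mathbb{RP}^m$, $[y]\mapsto [y/r]$, is well defined because the antipodal relation $y\sim -y$ is preserved by scalar multiplication, and it is a bijection. Likewise $X/r$ is a random variable on $\mathbb{RP}^m$. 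Therefore minimizing the left-hand side over $[y]\in\mathbb{RP}^m(r)$ is equivalent, via $\phi$, to minimizing $\mathbb{E}[-\ln p_1(X/r,[z],t/r^2)]$ over $[z]\in\mathbb{RP}^m$, whose argmin set is by definition $E_{t/r^2}(X/r)$.

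Putting the two steps together yields $[y]\in E_t(X)$ if and only if $[y/r]\in E_{t/r^2}(X/r)$, which is the claimed identity $E_t(X)=rE_{t/r^2}(X/r)$. I do not expect a genuine obstacle here; the statement is essentially the familiar fact that Brownian motion on $\mathbb{RP}^m(r)$ is a space-time rescaling of Brownian motion on $\mathbb{RP}^m$, and once the scaling identity for $p_r$ is verified on the level of the explicit expansion the rest is bookkeeping. The only minor care needed is to track the $r^{-m}$ normalization and to check that the equivalence class structure is preserved by $y\mapsto y/r$, both of which are straightforward.
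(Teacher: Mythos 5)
Your proposal is correct and follows essentially the same route as the paper: it starts from the scaling identity $p_r([x],[y],t)=r^{-m}p_1([x/r],[y/r],t/r^2)$, takes logarithms and expectations, and observes that the additive constant $m\ln r$ does not affect the argmin, so the minimizer sets correspond under $[y]\mapsto[y/r]$. The only difference is that you explicitly verify the scaling identity term by term from the heat kernel expansion, whereas the paper states it directly; this is a harmless elaboration, not a different argument.
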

\begin{proof}
From
$$p_{\mathbb{RP}^{m}(r)}([x],[y],t) =\frac{1}{r^m} p_{\mathbb{RP}^{m}}\left(\left[\frac{x}{r}\right],\left[\frac{y}{r}\right],\frac{t}{r^2}\right)$$
follows
$$\mathbb{E}[-\ln(p_{\mathbb{RP}^{m}(r)}(X,[y],t))] = -\ln\left(\frac{1}{r^m}\right)+ \mathbb{E}\left[-\ln\left(p_{\mathbb{RP}^{m}}\left(\frac{X}{r},\left[\frac{y}{r}\right],\frac{t}{r^2}\right)\right)\right]$$
and this in turn implies
\begin{align*}
&\argmin_{[y] \in \mathbb{RP}^{m}(r)}\mathbb{E}[-\ln(p_{\mathbb{RP}^{m}(r)}(X,[y],t))] \\
&= \left\{ [y] \in \mathbb{RP}^{m}(r)\big| \left[\frac{y}{r}\right] \in \argmin_{[z] \in \mathbb{RP}^{m}}\mathbb{E}\left[-\ln\left(p_{\mathbb{RP}^{m}}\left(\frac{X}{r},[z],\frac{t}{r^2}\right)\right)\right]\right\}.
\end{align*}
This statement is equivalent to
$$E_{t}(X) = \left\{[y]\in\mathbb{RP}^{m}(r)\big| \left[\frac{y}{r}\right] \in E_{\frac{t}{r^2}}\left(\frac{X}{r}\right) \right\}.$$
\end{proof}

\begin{Thm}\cite[Theorem~4.2.8]{Duesberg2024_master}\label{thm:diff_mean_char}
Let $m \in \mathbb{N}_{\ge 2}, r > 0$ and $X$ be a random variable mapping to $\mathbb{RP}^{m}(r)$ equipped with the residual metric. Then $E^{lower}_{\infty}(X)$ and $E^{upper}_{\infty}(X)$ are contained in $D_{r} / \sim$, where $D_{r}$ denotes the intersection of $\mathbb{S}^{m}(r)$ with the eigenspace of the largest  eigenvalue of $\mathbb{E}[XX^{T}]$, and $$x \sim y :\Leftrightarrow x = y \lor x = -y.$$
%Further, $(E_{t}(X))_{t>0}$ converges to $D_{r}/ \sim $ in the sense of Ziezold.\\
If the largest eigenvalue of $\mathbb{E}[XX^{T}]$ has multiplicity $1$, then 
$$E^{lower}_{\infty}(X) = E^{upper}_{\infty}(X) = D_{r}/\sim. $$
\end{Thm}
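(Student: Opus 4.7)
The plan is to reduce the statement directly to Theorem~\ref{thm:diff_mean} via the rescaling map $\phi_r : \mathbb{RP}^{m}(r) \to \mathbb{RP}^{m}$, $\phi_r([x]) := [x/r]$. First I would observe that $\phi_r$ is a bijection which is an isometry with respect to the respective residual metrics, since
$$d_{\text{res}}^{\mathbb{RP}^{m}(r)}([x],[y]) = \sqrt{1-\langle x/r, y/r\rangle^2} = d_{\text{res}}^{\mathbb{RP}^{m}}(\phi_r[x],\phi_r[y]).$$
In particular, $\phi_r$ sends convergent sequences (w.r.t. $d_{\text{res}}$) to convergent sequences with the same limit point, so the Kuratowski upper and lower limits commute with $\phi_r$.

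Next, by Theorem~\ref{thm:diff_mean_sets_r}, $[y] \in E_t(X)$ if and only if $\phi_r[y] \in E_{t/r^2}(X/r)$. Since $t \to \infty$ is equivalent to $t/r^2 \to \infty$, this identity lifts to the Kuratowski limit sets in Definition~\ref{def:limits}, yielding
$$E_{\infty}^{lower}(X) = \phi_r^{-1}\!\left(E_{\infty}^{lower}(X/r)\right), \qquad E_{\infty}^{upper}(X) = \phi_r^{-1}\!\left(E_{\infty}^{upper}(X/r)\right).$$
Since $X/r$ is a random variable mapping into $\mathbb{RP}^{m} = \mathbb{RP}^{m}(1)$, Theorem~\ref{thm:diff_mean} applies and gives that both Kuratowski limit sets for $X/r$ are contained in $M_0(X/r) := \argmax_{[z]\in\mathbb{RP}^{m}} z^T \mathbb{E}[(X/r)(X/r)^T] z$, with equality when the largest eigenvalue of $\mathbb{E}[(X/r)(X/r)^T]$ has multiplicity one.

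Now I would identify $\phi_r^{-1}(M_0(X/r))$ with $D_r/\!\sim$. Since $\mathbb{E}[(X/r)(X/r)^T] = r^{-2}\mathbb{E}[XX^T]$, the two matrices have identical eigenspaces; let $V$ be the eigenspace of the largest eigenvalue. Then $V \cap \mathbb{S}^{m}$ is exactly the set $D$ associated to $X/r$ in Theorem~\ref{thm:diff_mean}, while $V \cap \mathbb{S}^{m}(r) = D_r$, and $\phi_r$ maps one onto the other. Hence $\phi_r^{-1}(D/\!\sim) = D_r/\!\sim$, which together with the containments above yields the first claim and, under the multiplicity-one hypothesis, the equality $E_{\infty}^{lower}(X) = E_{\infty}^{upper}(X) = D_r/\!\sim$.

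There is no real obstacle here beyond bookkeeping: the substantive analytic work is already carried out in Theorem~\ref{thm:diff_mean}, and the only thing to verify carefully is that the rescaling $\phi_r$ interacts correctly with the residual metric and with the Kuratowski limits, both of which follow immediately from $\phi_r$ being a bi-isometry and from the scaling identity in Theorem~\ref{thm:diff_mean_sets_r}.
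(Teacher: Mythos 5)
Your proposal is correct and takes essentially the same route as the paper: the paper likewise reduces to Theorem~\ref{thm:diff_mean} by the scaling identity of Theorem~\ref{thm:diff_mean_sets_r}, verifies that convergence of $([y]_{t_k})_k$ in the residual metric on $\mathbb{RP}^{m}(r)$ is equivalent to convergence of $([y/r]_{t_k})_k$ on $\mathbb{RP}^{m}$ so that the Kuratowski limits commute with the rescaling, and identifies $r(D/\!\sim)$ with $D_r/\!\sim$ via the shared eigenspaces of $\mathbb{E}[XX^T]$ and $r^{-2}\mathbb{E}[XX^T]$. Your $\phi_r$ notation is just a repackaging of the same bookkeeping.
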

\begin{proof}
It will be shown that 
$$E_{\infty}^{lower}(X) = r E_{\infty}^{lower}\left(\frac{X}{r}\right) \text{ and } E_{\infty}^{upper}(X) = r E_{\infty}^{upper}\left(\frac{X}{r}\right),$$
since this statement implies, using the results from Theorem~\ref{thm:diff_mean}, that 
$$E_{\infty}^{lower}(X) = E_{\infty}^{upper}(X) \subseteq r(D/\sim) = D_{r}/\sim,$$
where $D$ denotes the eigenspace to the largest eigenvalue of $\mathbb{E}\left[\frac{X}{r}\left(\frac{X}{r}\right)^T\right]$ intersected with $\mathbb{S}^{m}$, and where equality holds if the largest eigenvalue of $\mathbb{E}[XX^T]$, which is also the largest eigenvalue of $\frac{1}{r^2}\mathbb{E}[XX^T] = \mathbb{E}\left[\frac{X}{r}\left(\frac{X}{r}\right)^T\right]$, has multiplicity $1$.\\

Let $0 < t_{k} \stackrel{k\to \infty}{\to} \infty$ be any monotone increasing sequence, and let the sequence $([y]_{{t}_{k}})_{k}, [y]_{t_{k}} \in E_{t_{k}}(X)$ be such that $([y]_{{t}_{k}})_{k}$ converges to a $[y] \in \mathbb{RP}^{m}(r)$ with respect to the residual metric. The convergence of $([y]_{{t}_{k}})_{k}$ to $[y]$ is equivalent to the convergence of the sequence $(\left[\frac{y}{r}\right]_{{t}_{k}})_{k}$ to $\left[\frac{y}{r}\right]$ in the residual metric on $\mathbb{RP}^{m}$.
Further, by Theorem~\ref{thm:diff_mean_sets_r}: 
$$\left[\frac{y}{r}\right]_{{t}_{k}} \in E_{\frac{t_{k}}{r^2}}\left(\frac{X}{r}\right) \text{ for all } k \in \mathbb{N}.$$ 
In consequence holds for any monotone increasing sequence $0 < t_{k} \stackrel{k\to \infty}{\to} \infty$:
$$\text{Li}_{k \to \infty} E_{t_{k}}(X) =  r \text{Li}_{k \to \infty} E_{\frac{t_{k}}{r^2}}\left(\frac{X}{r}\right)$$
and
$$\text{Ls}_{k \to \infty} E_{t_{k}}(X) =  r \text{Ls}_{k \to \infty} E_{\frac{t_{k}}{r^2}}\left(\frac{X}{r}\right),$$
yielding
$$ \bigcap_{t_{k} \stackrel{k\to \infty}{\to}  \infty} \text{Li}_{k \to \infty} E_{t_{k}}(X) =  \bigcap_{t_{k} \stackrel{k\to \infty}{\to} \infty} r \text{Li}_{k \to  \infty} E_{\frac{t_{k}}{r^2}}\left(\frac{X}{r}\right) = r \bigcap_{\tilde{t}_{k} \stackrel{k\to \infty}{\to} \infty} \text{Li}_{k \to \infty} E_{\tilde{t}_{k}}\left(\frac{X}{r}\right)  $$
as well as
$$
\bigcap_{t_{k} \stackrel{k\to \infty}{\to} \infty} \text{Ls}_{k \to \infty} E_{t_{k}}(X) =  \bigcap_{t_{k} \stackrel{k\to \infty}{\to} \infty} r\text{Ls}_{k \to \infty} E_{\frac{t_{k}}{r^2}}\left(\frac{X}{r}\right) = r \bigcap_{\tilde{t}_{k} \stackrel{k\to \infty}{\to}  \infty} \text{Ls}_{k \to \infty} E_{\tilde{t}_{k}}\left(\frac{X}{r}\right) .
$$
This shows, that 
$$E_{\infty}^{lower}(X) = r E_{\infty}^{lower}\left(\frac{X}{r}\right)$$ 
and 
$$ E_{\infty}^{upper}(X) = r E_{\infty}^{upper}\left(\frac{X}{r}\right).$$
\end{proof}

We have now fully described the long time limit of the diffusion mean set. Next, we investigate the relation of this set to the extrinsic mean set in the smooth isotropic embedding of $\mathbb{RP}^{m}$ into a Euclidean space. For this, we use an explicit construction of the isometric embedding into a Euclidean space of minimal dimension stated recursively in dimension $m$ by \cite{zhang}.

\begin{Thm}\cite{zhang}\label{thm:zhang}
Let  $m \in \mathbb{N}_{\ge 2}$ and $r_{m} = \sqrt{\frac{2(m+1)}{m}}$.
Then the real projective space $\mathbb{RP}^{m}(r_{m})$ %equipped with the standard Euclidean metric on the sphere 
%\mathbb{S}^{m}\left(\sqrt{\frac{2(n+1)}{n}}\right)$ 
can be embedded 
isometrically in $\mathbb{S}^{\frac{m(m+3)}{2}-1}$ with the embedding $\Phi$ given by
$$ \Phi([x]) := \frac{1}{2}\sqrt{\frac{m}{2(m+1)}}F_{m}(x,x),$$
where $F_{m}:\mathbb{R}^{m+1} \times \mathbb{R}^{m+1} \to \mathbb{R}^{\frac{m(m+3)}{2}}$ is recursively defined by 
\begin{align*}
F_{1}((x_{1},x_{2}),(y_{1},y_{2})) &= (x_{1}y_{1}-x_{2}y_{2}, x_{1}y_{2}+x_{2}y_{1}) \\
F_{m+1}\big((x,x_{m+2}),(y,y_{m+2})\big)&=
\big(F_{m}(x,y),x_{m+2}y + y_{m+2}x, \tau_{m+1}(\langle x,y \rangle - (m+1) x_{m+2}y_{m+2})\big)
\end{align*}
where $\tau_{m+1} = \sqrt{\frac{2}{(m+1)(m+2)}}$.
\end{Thm}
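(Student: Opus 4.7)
My plan is to establish well-definedness on the quotient, the image lying in $\mathbb{S}^{m(m+3)/2-1}$, and the isometry property simultaneously by induction on $m$. The base case $m=1$ reduces to a direct computation on $F_1(x,x) = (x_1^2 - x_2^2,\, 2x_1 x_2)$. A first key observation is that each $F_m$ is a symmetric bilinear map (immediate for $F_1$ and inherited through the recursion), so $F_m(-x,-x) = F_m(x,x)$ and $\Phi$ descends to $\mathbb{RP}^m(r_m)$ as claimed.

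The inductive engine is to carry along the two algebraic identities
\[
|F_m(x,x)|^2 = \tfrac{2m}{m+1}\,|x|^4, \qquad
\langle F_m(x,x), F_m(y,y)\rangle = 2\langle x,y\rangle^2 - \tfrac{2}{m+1}\,|x|^2|y|^2.
\]
For the inductive step, I would write $x' = (x, x_{m+2})$ and $y' = (y, y_{m+2})$, substitute $F_{m+1}(x', x')$ and $F_{m+1}(y', y')$ into the inner product, apply the induction hypothesis to the first block, and match coefficients against the target expression $2\langle x', y'\rangle^2 - \tfrac{2}{m+2}|x'|^2|y'|^2$. This leaves six independent monomials in $\langle x,y\rangle$, $x_{m+2}$, $y_{m+2}$, $|x|^2$, $|y|^2$ whose coefficients must balance, and the value $\tau_{m+1}^2 = \tfrac{2}{(m+1)(m+2)}$ is precisely what makes all six match simultaneously.

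Once both identities are in hand, the remaining conclusions follow quickly. Setting $y = x$ with $|x|^2 = r_m^2 = \tfrac{2(m+1)}{m}$ in the first identity yields $|\Phi([x])|^2 = 1$, so the image sits in $\mathbb{S}^{m(m+3)/2-1}$. Polarizing the first identity gives
\[
|F_m(x,v)|^2 = \tfrac{m-1}{m+1}\,\langle x,v\rangle^2 + |x|^2|v|^2,
\]
so for tangent vectors $v \perp x$ at a point with $|x|^2 = r_m^2$ one obtains $|d\Phi_{[x]}(v)|^2 = \tfrac{m}{2(m+1)}|F_m(x,v)|^2 = |v|^2$, which is the isometry property. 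Injectivity on the quotient follows from the second identity: $\langle \Phi([x]), \Phi([y])\rangle = 1$ forces $\langle x, y\rangle^2 = |x|^2|y|^2$ and hence $y = \pm x$.

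The main obstacle is the inductive step for the inner-product identity, where six distinct monomial coefficients must agree but the recursion provides only the single free parameter $\tau_{m+1}$; that one value balances all six simultaneously is the arithmetic heart of the argument. Everything else reduces to bilinear bookkeeping and the explicit computation for $F_1$.
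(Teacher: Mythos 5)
This theorem is not proved in the paper at all: it is imported verbatim from \cite{zhang} (with uniqueness traced to \cite{DoCarmoWallach1971}), so there is no in-paper proof to compare against. Judged on its own, your sketch is correct and would give a self-contained proof. The two invariants you propose to carry through the induction,
\[
|F_m(x,x)|^2=\tfrac{2m}{m+1}|x|^4,\qquad \langle F_m(x,x),F_m(y,y)\rangle=2\langle x,y\rangle^2-\tfrac{2}{m+1}|x|^2|y|^2,
\]
do hold (I checked the base case and the coefficient matching in the inductive step; all six monomial coefficients balance with $\tau_{m+1}^2=\tfrac{2}{(m+1)(m+2)}$), and the downstream deductions — image in the unit sphere, the polarized identity $|F_m(x,v)|^2=\tfrac{m-1}{m+1}\langle x,v\rangle^2+|x|^2|v|^2$, isometry on $v\perp x$, and injectivity via equality in Cauchy--Schwarz — all follow as you say; since $\mathbb{RP}^m(r_m)$ is compact, injective immersion suffices for an embedding. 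The closest material in the paper is the induction in Theorem~\ref{thm:embed_rel}, which proves the difference form $\|F_m(x,x)-F_m(y,y)\|^2=-4\bigl(\langle x,y\rangle^2-r_m^4\bigr)$ using Lemma~\ref{lem:auxi}; that is exactly the combination of your two identities restricted to $|x|=|y|=r_m$, but it only yields the chordal-distance relation needed for the extrinsic-mean identification, not the isometry of $\Phi$. Your decision to keep the identities valid for arbitrary $|x|,|y|$ is the essential extra ingredient: it is what makes the polarization in the tangent directions legitimate and hence delivers the isometry statement that the paper simply cites. The one point worth tightening in a full write-up is the polarization itself — since $x\mapsto|F_m(x,x)|^2$ is quartic, you should extract the $t^2$-coefficient of $|F_m(x+tv,x+tv)|^2$ and use the second identity to eliminate $\langle F_m(x,x),F_m(v,v)\rangle$, rather than appeal to polarization of a quadratic form; done that way the computation closes cleanly.
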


Since the long time limit of the diffusion mean set is defined by $\mathbb{E}[\langle X,y \rangle^{2}]$, we set out to show that this term is affinely related to the squared chordal distance in the isometric embedding. This result may seem surprising at first glance, because $\langle X,y \rangle^{2}$ appears to be related to the fourth order of chordal distance in the non-isometric embedding derived from the isometric embedding of the sphere. However, if one notes that the isometric embedding described by \cite{zhang} employs the Veronese map of order 2, the result intuitively makes sense.

\begin{Lem}\cite[Lemma~4.3.2]{Duesberg2024_master}\label{lem:auxi}
Let $m \in \mathbb{N}_{\ge 2}, r_{m} =  \sqrt{\frac{2(m+1)}{m}}$ and $[x],[y] \in \mathbb{RP}^{m}\left(r_{m}\right)$.
Then 
$$\frac{2}{m(m+1)}\left(m^2||x||^4 + m^2||y||^4 +2m||x||^2||y||^2\right) = 4\left(\frac{2(m+1)}{m}\right)^2.$$
\end{Lem}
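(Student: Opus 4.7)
The plan is to reduce the claim to a direct numerical computation. The key observation is that any $[x], [y] \in \mathbb{RP}^{m}(r_{m})$ admit representatives $x, y$ lying on $\mathbb{S}^{m}(r_{m})$ by Definition~\ref{def:rp}, so the norms are fixed: $\|x\|^{2} = \|y\|^{2} = r_{m}^{2} = \frac{2(m+1)}{m}$. The left-hand side is therefore independent of the choice of $[x]$ and $[y]$ and collapses to a purely numerical identity in $m$.

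To verify it, I would substitute $\|x\|^{4} = \|y\|^{4} = \|x\|^{2}\|y\|^{2} = r_{m}^{4} = \frac{4(m+1)^{2}}{m^{2}}$ into each of the three summands inside the parenthesis. The first two contribute $m^{2}\cdot \frac{4(m+1)^{2}}{m^{2}} = 4(m+1)^{2}$ each, and the third contributes $2m\cdot \frac{4(m+1)^{2}}{m^{2}} = \frac{8(m+1)^{2}}{m}$. Factoring $8(m+1)^{2}$ gives a total of $8(m+1)^{2}\cdot\frac{m+1}{m} = \frac{8(m+1)^{3}}{m}$. Multiplying by the prefactor $\frac{2}{m(m+1)}$ then yields $\frac{16(m+1)^{2}}{m^{2}}$, which is precisely $4r_{m}^{4} = 4\bigl(\frac{2(m+1)}{m}\bigr)^{2}$ on the right-hand side.

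There is no genuine obstacle here; the lemma is bookkeeping. Its role, judging from the paragraph preceding it and from Theorem~\ref{thm:zhang}, is to prepare an affine identification between $\mathbb{E}[\langle X, y \rangle^{2}]$ and the squared chordal distance in the Zhang embedding, where terms of the form $\|x\|^{4}$, $\|y\|^{4}$ and $\|x\|^{2}\|y\|^{2}$ arise upon expanding the Veronese-type map $F_{m}$ and will later need to be absorbed into a constant. The only point requiring any care is to remember that $x$ and $y$ are constrained to the sphere of radius $r_{m}$, which is what makes the otherwise non-trivial polynomial in $\|x\|$ and $\|y\|$ reduce to a constant depending only on $m$.
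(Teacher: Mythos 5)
Your proposal is correct and follows essentially the same route as the paper: substitute $\|x\|^{2}=\|y\|^{2}=r_{m}^{2}$ (valid since representatives lie on $\mathbb{S}^{m}(r_{m})$) and reduce the left-hand side to $\frac{2}{m(m+1)}\bigl(2m^{2}+2m\bigr)r_{m}^{4}=4r_{m}^{4}$. The arithmetic checks out and nothing further is needed.
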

\begin{proof}
\begin{align*}
\frac{2}{m(m+1)}\left(m^2||x||^4 + m^2||y||^4 +2m||x||^2||y||^2\right) &= \frac{2}{m(m+1)}\left(2m^2r_{m}^4 +2mr_{m}^4\right)\\
&= 4 r_{m}^4 = 4 \left(\frac{2(m+1)}{m}\right)^2.
\end{align*}
\end{proof}

\begin{Thm}\cite[Theorem~4.3.3]{Duesberg2024_master}\label{thm:embed_rel}
Let  $m \in \mathbb{N}_{\ge 2}$ and $r_{m} = \sqrt{\frac{2(m+1)}{m}}$. Let $[x],[y] \in \mathbb{RP}^{m}\left(r_{m}\right)$, and $\Phi$ the isometric embedding from Theorem~\ref{thm:zhang}. Then the following equality holds:
$$||\Phi([x]) - \Phi([y])||^2 = -\frac{m}{2(m+1)}\langle x,y \rangle^2 + \frac{2(m+1)}{m}.$$
In consequence, 
$$\argmax_{[y] \in \mathbb{RP}^{m}(r_{m})} \mathbb{E}[\langle X,y \rangle^{2}] = \argmin_{[y] \in \mathbb{RP}^{m}\left(r_{m}\right)} \mathbb{E}\left[||\Phi([X])-\Phi([y])||^2\right].$$
\end{Thm}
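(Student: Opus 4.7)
The plan is to expand $\|\Phi([x]) - \Phi([y])\|^2$ using $\Phi([z]) = \frac{1}{2}\sqrt{\frac{m}{2(m+1)}}\, F_m(z,z)$, so that it suffices to compute $\|F_m(x,x) - F_m(y,y)\|^2$ as a function of $\langle x,y\rangle^2$, $\|x\|^2$, $\|y\|^2$. On $\mathbb{RP}^m(r_m)\times \mathbb{RP}^m(r_m)$, this expression will simplify via Lemma~\ref{lem:auxi} to $4 r_m^4 - 4\langle x,y\rangle^2$, and multiplication by the scale $\frac{m}{8(m+1)}$ will then yield the claimed affine relation.

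The main technical step is to prove by induction on $m$ the bilinear identity
\begin{equation*}
\langle F_m(x,x), F_m(y,y)\rangle = 2\langle x,y\rangle^2 - \frac{2}{m+1}\|x\|^2\|y\|^2
\qquad \text{for all } x,y \in \mathbb{R}^{m+1}.
\end{equation*}
The base case $m=1$ is a direct computation from the explicit formula for $F_1$. For the induction step, write $X=(x,x_{m+2})$, $Y=(y,y_{m+2})$ in $\mathbb{R}^{m+2}$, use
\[
F_{m+1}(X,X) = \big(F_m(x,x),\; 2 x_{m+2}\, x,\; \tau_{m+1}(\|x\|^2 - (m+1) x_{m+2}^2)\big),
\]
and analogously for $Y$, then take the inner product. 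Substituting the inductive hypothesis, $\tau_{m+1}^2 = \frac{2}{(m+1)(m+2)}$, and the decompositions $\langle X,Y\rangle = \langle x,y\rangle + x_{m+2} y_{m+2}$, $\|X\|^2 = \|x\|^2 + x_{m+2}^2$, the coefficients of the monomials that appear ($\langle x,y\rangle^2$, $x_{m+2} y_{m+2}\langle x,y\rangle$, $\|x\|^2\|y\|^2$, the mixed partials $\|x\|^2 y_{m+2}^2$ and $\|y\|^2 x_{m+2}^2$, and $x_{m+2}^2 y_{m+2}^2$) are matched against those of $2\langle X,Y\rangle^2 - \frac{2}{m+2}\|X\|^2\|Y\|^2$. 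This coefficient bookkeeping is the most error-prone part; the key point is that the definition of $\tau_{m+1}$ is precisely what makes the mixed partial coefficients balance.

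Given the identity, setting $x=y$ yields $\|F_m(x,x)\|^2 = \frac{2m}{m+1}\|x\|^4$, and polarization produces
\[
\|F_m(x,x) - F_m(y,y)\|^2 = \frac{2}{m(m+1)}\big(m^2\|x\|^4 + m^2\|y\|^4 + 2m\|x\|^2\|y\|^2\big) - 4\langle x,y\rangle^2.
\]
By Lemma~\ref{lem:auxi}, on $\mathbb{RP}^m(r_m)\times \mathbb{RP}^m(r_m)$ the first parenthesised sum contributes exactly $4 r_m^4$, so multiplying by $\frac{m}{8(m+1)}$ and simplifying $\frac{m\, r_m^4}{2(m+1)} = \frac{2(m+1)}{m}$ gives the claimed equality. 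The ``In consequence'' statement then follows by linearity of expectation: the affine relation passes through $\mathbb{E}$, and since the coefficient $-\frac{m}{2(m+1)}$ of $\langle x,y\rangle^2$ is strictly negative, maximizing $\mathbb{E}[\langle X,y\rangle^2]$ over $[y]\in\mathbb{RP}^m(r_m)$ is the same as minimizing $\mathbb{E}[\|\Phi([X]) - \Phi([y])\|^2]$.
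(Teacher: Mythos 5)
Your proposal is correct, and it reaches the same intermediate polynomial identity as the paper, namely
\begin{equation*}
\|F_m(x,x)-F_m(y,y)\|^2 \;=\; -4\langle x,y\rangle^2+\tfrac{2}{m(m+1)}\bigl(m^2\|x\|^4+m^2\|y\|^4+2m\|x\|^2\|y\|^2\bigr),
\end{equation*}
before invoking Lemma~\ref{lem:auxi} and rescaling. The difference is in how the induction is organized. The paper inducts directly on the squared-distance expression, expanding $\|F_{m+1}(X,X)-F_{m+1}(Y,Y)\|^2$ in one long fourth-order computation and invoking Lemma~\ref{lem:auxi} both forwards and backwards inside the induction step; this obscures the fact that the true induction hypothesis must be the polynomial identity on all of $\mathbb{R}^{m+1}$, since the truncated points $x$ of $(x,x_{m+2})\in\mathbb{S}^{m+1}(r_{m+1})$ do not lie on $\mathbb{S}^m(r_m)$. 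You instead induct on the cross-term identity $\langle F_m(x,x),F_m(y,y)\rangle=2\langle x,y\rangle^2-\tfrac{2}{m+1}\|x\|^2\|y\|^2$, stated explicitly for all $x,y\in\mathbb{R}^{m+1}$, recover the norms by setting $x=y$, and obtain the squared distance by polarization. This buys two things: the induction step reduces to matching six monomial coefficients (where, as you note, the value of $\tau_{m+1}$ is exactly what balances the mixed terms) rather than the paper's much longer expansion, and the domain of validity of the induction hypothesis is unambiguous, with Lemma~\ref{lem:auxi} deferred to a single application at the very end. The final rescaling by $\tfrac{m}{8(m+1)}$ and the passage through $\mathbb{E}$ for the argmax/argmin statement are handled exactly as in the paper.
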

\begin{proof}
The Theorem will be proven by induction over $m$.\\
Since 
$$||\Phi([x]) - \Phi([y])||^2 = \frac{1}{4}\frac{m}{2(m+1)}||F_{m}(x,x) - F_{m}(y,y)||^2,$$
it suffices to show that 
$$ ||F_{m}(x,x) - F_{m}(y,y)||^2 = -4 \left(\langle x,y \rangle^2 - \left(\frac{2(m+1)}{m}\right)^2\right)$$
for all $m \in \mathbb{N}_{\ge 2}$.\\

\textbf{Base case}:\\
Consider the case of $m=1$. 
Then for $[(x_{1},x_{2})], [(y_{1},y_{2})] \in \mathbb{RP}^{1}\left(\sqrt{\frac{2(1+1)}{1}}\right)$:
\begin{align*}
&||F_{1}((x_{1},x_{2}),(x_{1},x_{2}))- F_{1}((y_{1},y_{2}),(y_{1},y_{2}))||^2\\
&= ||(x_{1}^2-x_{2}^2,2x_{1}x_{2}) - (y_{1}^2-y_{2}^2,2y_{1}y_{2})||^2 \\
&= ||\left((x_{1}^2 - x_{2}^2)-(y_{1}^2 - y_{2}^2), 2(x_{1}x_{2} - y_{1}y_{2})\right)||^2\\
&= (x_{1}^2-x_{2}^2)^2 + (y_{1}^2-y_{2}^2)^2 - 2(x_{1}^2-x_{2}^2)(y_{1}^2-y_{2}^2) + 4 (x_{1}x_{2}-y_{1}y_{2})^2\\
&= -4 (x_{1}^2y_{1}^2 + x_{2}^2y_{2}^2 +2 x_{1}y_{1}x_{2}y_{2}) 
+ 2 (x_{1}^2y_{1}^2 + x_{2}^2y_{2}^2) 
+ 2 (x_{1}^2y_{2}^2 + x_{2}^2y_{1}^2)
+ 4 (x_{1}^2x_{2}^2 + y_{1}^2y_{2}^2)\\ 
&\;\;\;\;+ (x_{1}^2-x_{2}^2)^2 + (y_{1}^2-y_{2}^2)^2 \\
&= -4 \langle (x_{1},x_{2}),(y_{1},y_{2}) \rangle^2 +
(x_{1}^4 + x_{2}^4 + 2x_{1}^2x_{2}^2) +
(y_{1}^4 + y_{2}^4 + 2y_{1}^2y_{2}^2) +
2 (x_{1}^2 + x_{2}^2)(y_{1}^2 + y_{2}^2) \\
&= -4 \langle (x_{1},x_{2}),(y_{1},y_{2}) \rangle^2 +
\frac{2}{1(1+1)} \left(1^2||(x_{1},x_{2})||^4 + 1^2||(y_{1},y_{2})||^4 + 2(1)||(x_{1},x_{2})|^2||(y_{1},y_{2})||^2 \right)\\
&=^{\text{Lemma~\ref{lem:auxi}}} -4 \left( \left\langle (x_{1},x_{2}), (y_{1},y_{2}) \right\rangle^2 - \left(\frac{2(1+1)}{1}\right)^2\right).
\end{align*}
\newpage
\textbf{Induction step:}\\
Now suppose the statement is true for an $m\in\mathbb{N}_{\ge 2}$. Then for $[(x,x_{m+2})], [(y,y_{m+2})] \in \mathbb{RP}^{m+1}\left(\sqrt{\frac{2(m+2)}{m+1}}\right)$:
\begin{align*}
&||F_{m+1}((x,x_{m+2}),(x,x_{m+2}))- F_{m+1}((y,y_{m+2}),(y,y_{m+2}))||^2\\
&= ||F_{m}(x,x)-F_{m}(y,y)||^2 + 4||x_{m+2}x - y_{m+2}y||^2 + \\
&\frac{2}{(m+1)(m+2)} \Big((\langle x,x \rangle - \langle y,y \rangle) - (m+1)(x_{m+2}^2 - y_{m+2}^2)\Big)^2 \\
&=^{\text{Lemma~\ref{lem:auxi}}} - 4 \langle x,y \rangle^2 + \frac{2}{m(m+1)}\left( m^2||x||^4 + m^2||y||^4 + 2m||x||^2||y||^2\right) \\
&+ 4 \sum_{i=1}^{m+1}(x_{m+2}x_{i} - y_{m+2}y_{i})^2 
+ \frac{2}{(m+1)(m+2)}\big((||x||^2 - ||y||^2) - (m+1)(x_{m+2}^2 - y_{m+2}^2)\big)^2 \\
&= -4 \langle x,y \rangle^2 -4 x_{m+2}^2y_{m+2}^2 - 8 x_{m+2}y_{m+2} \langle x,y \rangle \\
&+ \frac{2}{(m+1)(m+2)}\Big(((m+2)m+1)||x||^4 
+ ((m+2)m+1)||y||^4 
+ (2(m+2)-2)||x||^2||y||^2\\
&+ (m+1)^2 x_{m+2}^4 + (m+1)^2 y_{m+2}^4
+ (2(m+1)(m+2)- 2(m+1)^2)x_{m+2}^2y_{m+2}^2 \\
&+ (2(m+1)(m+2)-2(m+1))||x||^2x_{m+2}^2 
+ (2(m+1)(m+2)-2(m+1))||y||^2y_{m+2}^2\\
&+ 2(m+1)||x||^2y_{m+2}^2 
+ 2(m+1)||y||^2x_{m+2}^2\Big)\\
&= -4 \langle (x,x_{m+2}),(y,y_{m+2}) \rangle^2 
+ \frac{2}{(m+1)(m+2)} \Big((m+1)^2(||x||^4 + 2||x||^2x_{m+2}^2 + 
x_{m+2}^4) \\
&+(m+1)^2(||y||^4 + 2||y||^2y_{m+2}^2 + y_{m+2}^4) 
+ 2(m+1)( (||x||^2 + x_{m+2}^2)(||y||^2 + y_{m+2}^2))\Big) \\
&= -4 \langle (x,x_{m+2}),(y,y_{m+2}) \rangle^2 +\frac{2}{(m+1)(m+2)}\Big( (m+1)^2||(x,x_{m+2})||^4 + (m+1)^2||(y,y_{m+2})||^4 \\
&+2(m+1)||(x,x_{m+2})||^2|(y,y_{m+2})||^2\Big)\\
%&= -4 \langle (x,x_{m+2}),(y,y_{m+2}) \rangle^2 
% + \frac{2}{(m+1)(m+2)}\left(((m+1)^2 + (m+1)^2 + 2(m+1)) \left(\frac{2(m+2)}{m+1}\right)^2 \right) \\
%&= -4 \langle (x,x_{m+2}),(y,y_{m+2}) \rangle^2 +
%\frac{2}{(m+2)}(2(m+1)+2)\left(\frac{2(m+2)}{m+1}\right)^2\\
&=^{\text{Lemma~\ref{lem:auxi}}} -4 \left( \left\langle (x,x_{m+2}),(y,y_{m+2}) \right\rangle^2 -
\left(\frac{2(m+2)}{m+1}\right)^2\right).
\end{align*}
\end{proof}

Now, we have all tools to show that the long time limit of the diffusion means is a subset of the extrinsic mean set in the isometric embedding of real projective spaces of radius $r_{m} = \sqrt{\frac{2(m+1)}{m}}$. The first part of the following theorem states this result in terms of the definition of the extrinsic mean as minimizer of expected squared chordal distance. The second part of the theorem relates this to the more well-known definition the extrinsic mean on a sphere, leveraging the fact that $\mathbb{RP}^m(r_m)$ is embedded into a unit sphere in the isometric embedding.

\begin{Thm}\cite[Theorem~4.3.4]{Duesberg2024_master}\label{thm:results_rp_embed}
Let  $m \in \mathbb{N}_{\ge 2}$,$r_{m} = \sqrt{\frac{2(m+1)}{m}}$ and $\Phi$ as in Theorem~\ref{thm:zhang}. Let $X$ be a random variable mapping to $\mathbb{RP}^{m}(r_{m})$.
Then:
\begin{enumerate}
\item[1)] $E_{\infty}^{lower}(X)$ and $E_{\infty}^{upper}(X)$ are contained in 
$$ M := \Phi^{-1}\left(\argmin_{z \in \Phi(\mathbb{RP}^{m}(r_{m}))} \mathbb{E}[||\Phi(X)-z||^2]\right),$$
and are equal to $M$, if the largest eigenvalue of $\mathbb{E}[XX^{T}]$ has algebraic/geometric multiplicity $1$. 
\item[2)] If the largest eigenvalue of $\mathbb{E}[XX^{T}]$ has multiplicity $1$ and if additionally $\mathbb{E}[\Phi(X)] \neq 0$ and  $\frac{\mathbb{E}[\Phi(X)]}{||\mathbb{E}[\Phi(X)]||} \in \Phi\left(\mathbb{RP}^{m}\left(r_{m}\right)\right)$, 
$E^{lower}_{\infty}(X)$ and $E^{upper}_{\infty}(X)$ coincide with the extrinsic mean w.r.t. $\Phi$ of $X$ given by   
$$ [y]_{\infty} := \Phi^{-1}\left(\frac{\mathbb{E}[\Phi(X)]}{||\mathbb{E}[\Phi(X)]||}\right).$$
\end{enumerate}
\end{Thm}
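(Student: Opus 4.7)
My plan is to assemble the preceding results, using the set identification from Theorem~\ref{thm:diff_mean_char} together with the affine relation between $\mathbb{E}[\langle X,y\rangle^2]$ and the expected squared chordal distance established in Theorem~\ref{thm:embed_rel}.

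For part 1), Theorem~\ref{thm:diff_mean_char} yields $E^{lower}_\infty(X) \subseteq E^{upper}_\infty(X) \subseteq D_{r_m}/\sim$, and by the proof of Theorem~\ref{thm:diff_mean} this set equals $\argmax_{[y]\in\mathbb{RP}^m(r_m)} y^T\mathbb{E}[XX^T]y = \argmax_{[y]} \mathbb{E}[\langle X,y\rangle^2]$. Taking expectations in the identity of Theorem~\ref{thm:embed_rel} gives
$$\mathbb{E}[\|\Phi(X)-\Phi([y])\|^2] = -\frac{m}{2(m+1)}\mathbb{E}[\langle X,y\rangle^2] + \frac{2(m+1)}{m},$$
so $[y]$ maximizes $\mathbb{E}[\langle X,y\rangle^2]$ if and only if it minimizes $\mathbb{E}[\|\Phi(X)-\Phi([y])\|^2]$. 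Since $\Phi$ is a bijection onto its image, this common argmin set coincides with $M$, proving the containment. The equality statement in the multiplicity-one case transfers verbatim from Theorem~\ref{thm:diff_mean_char}.

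For part 2), the decisive observation is that $\Phi(\mathbb{RP}^m(r_m)) \subseteq \mathbb{S}^{m(m+3)/2-1}$, so every $z$ in the image and $\Phi(X)$ itself have unit norm. Expanding yields
$$\mathbb{E}[\|\Phi(X)-z\|^2] = 2 - 2\langle \mathbb{E}[\Phi(X)], z\rangle,$$
which, under $\mathbb{E}[\Phi(X)] \neq 0$, is uniquely minimized over the ambient unit sphere at $z^{*} := \mathbb{E}[\Phi(X)]/\|\mathbb{E}[\Phi(X)]\|$ by Cauchy--Schwarz. The further assumption $z^{*} \in \Phi(\mathbb{RP}^m(r_m))$ forces $z^{*}$ to also be the unique minimizer of the expectation restricted to the image, so $M = \{\Phi^{-1}(z^{*})\} = \{[y]_\infty\}$. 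Combining this with the multiplicity-one hypothesis and the equality half of part 1) gives $E^{lower}_\infty(X) = E^{upper}_\infty(X) = \{[y]_\infty\}$.

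The proof is essentially a clean assembly of the previously built machinery; the only point that requires care is keeping track of the bijection $\Phi$ between $\mathbb{RP}^m(r_m)$ and its image, so that the identification $\argmin$ over $[y]$ versus $\argmin$ over $z = \Phi([y])$ is legitimate. Once this correspondence is in place, the affine relation of Theorem~\ref{thm:embed_rel} and the spherical expansion above reduce everything to applying Theorem~\ref{thm:diff_mean_char}, with no genuinely new analytic content.
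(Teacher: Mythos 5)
Your proposal is correct and follows essentially the same route as the paper's own proof: containment via Theorem~\ref{thm:diff_mean_char}, identification of $M$ with $\argmax_{[y]}\mathbb{E}[\langle X,y\rangle^2]$ via the affine relation of Theorem~\ref{thm:embed_rel}, and the unit-sphere expansion $\mathbb{E}[\|\Phi(X)-z\|^2]=2-2\langle\mathbb{E}[\Phi(X)],z\rangle$ for part 2). No gaps.
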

\begin{proof}
From Theorem~\ref{thm:diff_mean_char} it is known that $E_{\infty}^{lower}(X)$ and $E_{\infty}^{upper}(X)$ are contained in $D_{r_{m}} / \sim$ where $D_{r_{m}}$ denotes the intersection of $\mathbb{S}^{m}\left(r_{m}\right)$
with the eigenspace of the largest eigenvalue of $\mathbb{E}[XX^{T}]$, and that $E_{\infty}^{lower}(X)$ and $E_{\infty}^{upper}(X)$ are equal to $D_{r_{m}} / \sim$, if the largest eigenvalue of $\mathbb{E}[XX^T]$ has multiplicity $1$. \\
In consequence, to show the first statement of the Theorem, it suffices to show that  $M = D_{r_{m}} / \sim.$\\
Since 
$$\mathbb{E}[\langle X,y \rangle^2] = \mathbb{E}[(X^{T}y)^{T}X^{T}y] = \mathbb{E}[y^{T}XX^{T}y] = y^{T}\mathbb{E}[XX^{T}]y,$$
the maximizers of $\mathbb{E}[\langle X,y \rangle^2]$ in $\mathbb{RP}^{m}(r_{m})$ are exactly the elements of $D_{r_{m}} / \sim$. 
Therefore:
$$ \argmax_{[y] \in \mathbb{RP}^{m}(r_{m})} \mathbb{E}[\langle X,y \rangle^2] = D_{r_{m}} / \sim .$$
By Theorem~\ref{thm:embed_rel}, 
$$M = \argmax_{[y] \in \mathbb{RP}^{m}(r_{m})} \mathbb{E}[\langle X,y \rangle^2],$$
and therefore
$$ M = D_{r_{m}} / \sim.$$
For the second statement of the Theorem, note that for any $z \in \mathbb{S}^{\frac{m(m+3)}{2}-1}$
\begin{align*}
\mathbb{E}[||\Phi(X)-z||^2] &= \mathbb{E}[\langle \Phi(X)-z, \Phi(X)-z \rangle] \\
&= \mathbb{E}[\langle \Phi(X),\Phi(X) \rangle + \langle z,z \rangle - 2\langle  \Phi(X),z \rangle]\\
&= \mathbb{E}[2 - 2\langle  \Phi(X),z \rangle] \\
&= 2 - 2\langle \mathbb{E}[\Phi(X)],z \rangle.
%&= 2(1-||\mathbb{E}[\Phi(X)]||\cos(\sphericalangle(\mathbb{E}[\Phi(X)],z)))
\end{align*}
Therefore, if $\mathbb{E}[\Phi(X)] \neq 0$, then 
$\mathbb{E}[||\Phi(X)-z||^2]$
is minimized with respect to $ z \in \mathbb{S}^{\frac{m(m+3)}{2}-1}$ uniquely by $\frac{\mathbb{E}[\Phi(X)]}{||\mathbb{E}[\Phi(X)]||}$.
In consequence, if $\frac{\mathbb{E}[\Phi(X)]}{||\mathbb{E}[\Phi(X)]||}$ is contained in $\Phi\left(\mathbb{RP}^{m}(r_{m})\right)$, noting that as an isometric embedding $\Phi$ is an injective function, $M$ consists only of the element
$$[y]_{\infty} := \Phi^{-1}\left(\frac{\mathbb{E}[\Phi(X)]}{||\mathbb{E}[\Phi(X)]||}\right).$$
This element $[y]_{\infty}$ is also the extrinsic mean with respect to $\Phi$ of $X$, since $\frac{\mathbb{E}[\Phi(X)]}{||\mathbb{E}[\Phi(X)]||}$ minimizes uniquely the expression $||y- \mathbb{E}[\Phi(X)]||$ with respect to $y \in \Phi(\mathbb{RP}^{m}(r))$.
\end{proof}

Finally, we would like to extend the result of Theorem~\ref{thm:results_rp_embed} to arbitrary $r > 0$. First we note that the definition of the map $\Phi$ via the map $F_m$ immediately permits an extension of $\Phi$ to a map $\widetilde{\Phi} :\mathbb{R}^{m+1} \to \mathbb{R}^{\frac{m(m+3)}{2}}$. With this map we can now state a scaling result.

\begin{Lem}
    For every $r \in \mathbb{R}_{>0}$ and $x \in \mathbb{R}^{m+1}$,
    \[\widetilde{\Phi}(rx) = r^2 \widetilde{\Phi}(x)\]
\end{Lem}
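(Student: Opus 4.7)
The plan is to prove the scaling identity by establishing that $F_{m}$ is bilinear in its two arguments, which immediately gives the degree-two homogeneity of $\widetilde{\Phi}$. Explicitly, once one knows that $F_{m}(ru, rv) = r^{2}F_{m}(u,v)$, the definition
\[
\widetilde{\Phi}(x) = \tfrac{1}{2}\sqrt{\tfrac{m}{2(m+1)}}\,F_{m}(x,x)
\]
yields $\widetilde{\Phi}(rx) = r^{2}\widetilde{\Phi}(x)$.

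First I would prove by induction on $m$ that $F_{m}:\mathbb{R}^{m+1}\times\mathbb{R}^{m+1}\to \mathbb{R}^{m(m+3)/2}$ is bilinear. The base case $m=1$ is immediate from
\[
F_{1}((x_{1},x_{2}),(y_{1},y_{2})) = (x_{1}y_{1}-x_{2}y_{2},\; x_{1}y_{2}+x_{2}y_{1}),
\]
since each component is visibly bilinear in $((x_{1},x_{2}),(y_{1},y_{2}))$. For the induction step, assume $F_{m}$ is bilinear and inspect the three blocks of $F_{m+1}\bigl((x,x_{m+2}),(y,y_{m+2})\bigr)$: the first block is $F_{m}(x,y)$, bilinear by hypothesis; the second block $x_{m+2}y+y_{m+2}x$ is bilinear as a sum of products of one coordinate from the first argument and one from the second; the third block $\tau_{m+1}(\langle x,y\rangle-(m+1)x_{m+2}y_{m+2})$ is bilinear since the Euclidean inner product is bilinear and $x_{m+2}y_{m+2}$ is a product of one coordinate from each argument. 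Hence $F_{m+1}$ is bilinear.

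With bilinearity in hand, for any $r>0$ and $x\in\mathbb{R}^{m+1}$ one has $F_{m}(rx,rx) = r\cdot r\cdot F_{m}(x,x) = r^{2}F_{m}(x,x)$, so
\[
\widetilde{\Phi}(rx) = \tfrac{1}{2}\sqrt{\tfrac{m}{2(m+1)}}\,F_{m}(rx,rx) = r^{2}\,\tfrac{1}{2}\sqrt{\tfrac{m}{2(m+1)}}\,F_{m}(x,x) = r^{2}\,\widetilde{\Phi}(x).
\]
There is no real obstacle here; the only thing to verify carefully is that none of the recursive ingredients introduce a non-bilinear term — in particular, the inner product $\langle x,y\rangle$ in the third block of $F_{m+1}$ is genuinely bilinear (not sesquilinear or a norm), and the prefactor $\tau_{m+1}$ does not depend on the arguments. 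Once this is noted, the inductive argument is mechanical.
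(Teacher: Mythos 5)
Your proposal is correct and follows essentially the same route as the paper: an induction on $m$ over the recursive definition of $F_{m}$, checking each of the three blocks in the recursion step. The only difference is that you establish the slightly stronger property of bilinearity of $F_{m}$ and then specialize to $F_{m}(rx,rx)=r^{2}F_{m}(x,x)$, whereas the paper verifies the identity $F_{m}(rx,ry)=r^{2}F_{m}(x,y)$ directly; both yield the claim for $\widetilde{\Phi}$ immediately.
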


\begin{proof}
    We show by induction that for every $ m \ge 1$, $F_m(rx,ry) = r^2 F_m(x,y)$.
    \begin{align*}
        F_{1}((r x_{1},r x_{2}),(r y_{1},r y_{2})) =&~ (r x_{1}r y_{1}-r x_{2}r y_{2}, r x_{1} r y_{2}+ r x_{2}r y_{1})\\
        =&~ r^2 (x_{1} y_{1}- x_{2} y_{2}, x_{1} y_{2} + x_{2} y_{1}) = r^2 F_{1}((x_{1}, x_{2}),(y_{1}, y_{2})) \\
        F_{m+1}\big((r x,r x_{m+2}),(r y,r y_{m+2})\big)= &~
        \big(F_{m}(r x,r y),r x_{m+2} r y + r y_{m+2} rx,\\
        &~~\tau_{m+1}(\langle r x,r y \rangle - (m+1) r x_{m+2}r y_{m+2})\big)\\
        =&~ r^2
        \big(F_{m}(x,y),x_{m+2}y + y_{m+2}x,\\
        &~~\tau_{m+1}(\langle x,y \rangle - (m+1) x_{m+2}y_{m+2})\big)\\
        =&~ r^2 F_{m+1}\big((x, x_{m+2}),(y, y_{m+2})\big)
    \end{align*}
    The claim follows at once.
\end{proof}

In consequence, it is immediately clear that $\Phi_r := \frac{r_m}{r}\widetilde{\Phi}|_{\mathbb{RP}^m(r)}$ defines a map $\Phi_r : \mathbb{RP}^m(r) \to \mathbb{S}^{\frac{m(m+1)}{2}}\left(\frac{r}{r_m}\right) \subset \mathbb{R}^{\frac{m(m+3)}{2}}$.

\begin{Thm}
    For any $[x] \in \mathbb{RP}^m(r_m)$, this map satisfies the relation $\Phi_{r}\left(\frac{r}{r_{m}}[x]\right) = \frac{r}{r_{m}}\Phi\left([x]\right)$ and is an isometric embedding.
\end{Thm}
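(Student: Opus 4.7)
The plan is to verify the identity first and then deduce the isometric embedding property from it by bookkeeping of uniform scalings, leaning on Zhang's theorem for the case $r=r_m$ and on the preceding scaling lemma for $\widetilde\Phi$.

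First I would unwrap the definition $\Phi_r := \frac{r_m}{r}\widetilde\Phi|_{\mathbb{RP}^m(r)}$. Given $[x]\in\mathbb{RP}^m(r_m)$ with representative $x\in\mathbb{S}^m(r_m)$, the scaled point $\frac{r}{r_m}x$ lies in $\mathbb{S}^m(r)$ and represents $\frac{r}{r_m}[x]\in\mathbb{RP}^m(r)$. Applying the scaling lemma for $\widetilde\Phi$ gives
\[
\Phi_r\!\left(\tfrac{r}{r_m}[x]\right) \;=\; \tfrac{r_m}{r}\,\widetilde\Phi\!\left(\tfrac{r}{r_m}x\right) \;=\; \tfrac{r_m}{r}\Bigl(\tfrac{r}{r_m}\Bigr)^{\!2}\widetilde\Phi(x) \;=\; \tfrac{r}{r_m}\,\Phi([x]),
\]
where in the last step I use that $\widetilde\Phi$ restricted to $\mathbb{RP}^m(r_m)$ is exactly $\Phi$. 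As a byproduct, $\|\Phi_r(\frac{r}{r_m}[x])\|=\frac{r}{r_m}$, which confirms that $\Phi_r$ does land in the sphere $\mathbb{S}^{\frac{m(m+3)}{2}-1}(r/r_m)$.

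For the isometric embedding claim I would introduce the two obvious scaling diffeomorphisms, $s:\mathbb{RP}^m(r_m)\to\mathbb{RP}^m(r)$, $[x]\mapsto\frac{r}{r_m}[x]$, and $\sigma:\mathbb{S}^{N-1}(1)\to\mathbb{S}^{N-1}(r/r_m)$, $z\mapsto\frac{r}{r_m}z$, where $N=\frac{m(m+3)}{2}$. By construction of the Riemannian metrics on spheres of varying radii (both $\mathbb{RP}^m(r)$ and $\mathbb{S}^{N-1}(r/r_m)$ inherit their metrics from the ambient Euclidean scalings), one has $s^*g_{r}=\bigl(\tfrac{r}{r_m}\bigr)^2 g_{r_m}$ and $\sigma^*g_{\mathbb{S}^{N-1}(r/r_m)}=\bigl(\tfrac{r}{r_m}\bigr)^2 g_{\mathbb{S}^{N-1}(1)}$. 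The identity just established is exactly the commutativity $\Phi_r\circ s=\sigma\circ\Phi$. Pulling the target metric back along both compositions and using Zhang's theorem (that $\Phi$ is isometric, i.e. $\Phi^*g_{\mathbb{S}^{N-1}(1)}=g_{r_m}$), I obtain
\[
s^*\bigl(\Phi_r^*g_{\mathbb{S}^{N-1}(r/r_m)}\bigr) \;=\; \Phi^*\bigl(\sigma^*g_{\mathbb{S}^{N-1}(r/r_m)}\bigr) \;=\; \bigl(\tfrac{r}{r_m}\bigr)^2 g_{r_m} \;=\; s^*g_r.
\]
Since $s$ is a diffeomorphism, this forces $\Phi_r^*g_{\mathbb{S}^{N-1}(r/r_m)}=g_r$.

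It remains to note that $\Phi_r$ is a smooth embedding: smoothness is inherited from $\widetilde\Phi$, which is polynomial, and injectivity is immediate from the commutative diagram together with the injectivity of $\Phi$ and the bijectivity of $s$ and $\sigma$. No genuine obstacle is expected; the only thing to be careful about is distinguishing the three ambient scales ($\mathbb{RP}^m(r_m)$ versus $\mathbb{RP}^m(r)$, and unit sphere versus sphere of radius $r/r_m$) and keeping track of which factor of $\frac{r}{r_m}$ comes from which map, so that the quadratic scaling of $\widetilde\Phi$ exactly cancels with the two linear scalings applied on the domain and codomain sides of $\Phi_r$.
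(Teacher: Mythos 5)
Your proposal is correct and follows essentially the same route as the paper's (very terse) proof: the quadratic scaling of $\widetilde{\Phi}$ combined with the linear prefactors on domain and codomain gives the commutation relation, and the isometry property follows because tangent vectors are scaled by the same factor $\frac{r}{r_m}$ on both sides. Your version is in fact more complete, since you explicitly verify the relation $\Phi_r\circ s=\sigma\circ\Phi$ and carry out the pullback bookkeeping that the paper only gestures at.
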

\begin{proof}
    This follows from the fact that $\Phi$ is an isometric embedding: All tangent vectors are thus scaled by $\frac{r}{r_{m}}$, in both the domain as well as the image, thus the isometry property follows immediately.
\end{proof}

With this last result, we have extended the result to real projective spaces of arbitrary radii, thus making good on the conjecture to this effect in \cite[Remark~4.3.5]{Duesberg2024_master}

\section{Outlook}
\label{sec:outlook}
We have shown that the diffusion mean set on real projective spaces, endowed with the canonical metric tensor inherited from the sphere, converges to a subset of the extrinsic mean set in the smooth isometric embedding. This extends the prior result for the circle and spheres of arbitrary dimension. From these results we derive the conjecture

\begin{Conj} \label{conj:diff-ex}
  For every connected compact symmetric space, whose smooth isometric embedding into a Euclidean space of minimal dimension is unique up to isometries of the Euclidean space, the diffusion mean set converges to the extrinsic mean set in this isometric embedding in the sense of the upper and lower Kuratowski limit.
\end{Conj}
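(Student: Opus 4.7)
The plan is to mimic the structure of the proof for $\mathbb{RP}^m$ while replacing each concrete computation by its representation-theoretic analogue on a general connected compact symmetric space $(M,g)$. The key input is to identify, via Takahashi's minimality theorem combined with the Do~Carmo--Wallach rigidity cited in the introduction, the unique (up to ambient isometries) smooth isometric embedding $\Phi : M \to \mathbb{R}^N$ of minimal dimension with the \emph{first standard embedding} built from the eigenspace of the largest nonzero (i.e.\ closest-to-zero) eigenvalue $\lambda_\star$ of $\tfrac12\Delta_g$. Concretely, one expects
\[
    \Phi(x) = \alpha\bigl(\varphi_{\star,1}(x),\dots,\varphi_{\star,N_\star}(x)\bigr)
\]
for an orthonormal basis of that eigenspace and a normalising constant $\alpha$. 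The zonal sum $Z_\star(x,y):=\sum_{k=1}^{N_\star}\varphi_{\star,k}(x)\varphi_{\star,k}(y)$ then satisfies $\alpha^2 Z_\star(x,y) = \langle \Phi(x),\Phi(y)\rangle$, and since $\Phi(M)$ lies on a sphere in $\mathbb{R}^N$ the squared chordal distance $\|\Phi(x)-\Phi(y)\|^2$ reduces to an affine function of $Z_\star(x,y)$, exactly as in Theorem~\ref{thm:embed_rel} for $\mathbb{RP}^m$.

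The second step upgrades the heat kernel expansion of Theorem~\ref{thm:heat_expansion} to a Taylor expansion of $\ln p(x,y,t)$ in the spirit of Theorem~\ref{thm:taylor_ln}. Setting
\[
    c_l^t(x,y) := e^{\lambda_l t}\sum_{k=1}^{N_l}\varphi_{l,k}(x)\varphi_{l,k}(y),
\]
one needs uniform bounds $\|c_l^t\|_\infty \le P(l)\,e^{\lambda_l t}$ with polynomial $P$, playing the role of Lemma~\ref{lem:Gegenbauer_bound}. On a compact symmetric space these follow from the pointwise bound $\bigl|\sum_k \varphi_{l,k}(x)\varphi_{l,k}(y)\bigr| \le \sum_k \varphi_{l,k}(x)^2 = N_l/\mathrm{Vol}(M)$ (by Cauchy--Schwarz and homogeneity) together with the polynomial Weyl-law growth of $N_l$ in $|\lambda_l|$. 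With these bounds in hand, the Cauchy-product estimates of Lemma~\ref{lem:bound} and the Taylor expansion of Theorem~\ref{thm:taylor_ln} carry over verbatim, since only the exponent-minimisation step is used.

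The third step concludes via Lemma~\ref{lem:equal_lims}. Define
\[
    f(y) := \max_{z\in M}\mathbb{E}[Z_\star(X,z)] - \mathbb{E}[Z_\star(X,y)], \qquad M_0 := \{y \in M : f(y)=0\}.
\]
By the affine identity of the first step, $M_0 = \argmin_{y\in M}\mathbb{E}\bigl[\|\Phi(X)-\Phi(y)\|^2\bigr]$, which is the extrinsic mean set in the embedding $\Phi$. The spectral gap $\lambda_l < \lambda_\star$ for $l\neq\star$, together with the quadratic-exponent estimate of Lemma~\ref{lem:bound} applied to the Cauchy products $d_n^t$ and $h_k^t$, shows that after dividing by $e^{\lambda_\star t}$ all remaining terms in the Taylor expansion decay to zero as $t\to\infty$. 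This yields the uniform comparison hypothesis of Lemma~\ref{lem:equal_lims} and hence $E^{lower}_\infty(X),\,E^{upper}_\infty(X)\subseteq M_0$, as required.

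The main obstacle in this program is the first step beyond the rank-one case. For rank-one symmetric spaces (spheres, real/complex/quaternionic/Cayley projective spaces) Takahashi's theorem and the irreducibility of the $\lambda_\star$-eigenspace as a $G$-module force the minimal-dimension isometric embedding to coincide with the first standard embedding after a global rescaling. For higher-rank irreducible symmetric spaces, however, the Do~Carmo--Wallach moduli of minimal isometric immersions with a fixed eigenvalue is generally a nontrivial convex body, and translating the uniqueness hypothesis of the conjecture into the statement that $\Phi$ is built specifically from the $\lambda_\star$-eigenspace (rather than from a higher one) requires a delicate representation-theoretic rigidity argument for the Gelfand pair $(G,K)$. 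Establishing this rigidity, perhaps via a case analysis over the Cartan classification or via a structural lemma relating minimal-dimensional isometric embeddings to the first isotypic component of $L^2(M)$, is where the hard work of the conjecture is concentrated; once it is in place, the analytic machinery above goes through essentially automatically.
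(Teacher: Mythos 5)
This statement is a \emph{conjecture} in the paper: the authors give no proof of it, only a suggested avenue in Section~\ref{sec:outlook}, namely to show that the minimal-dimension isometric embedding $\Phi$ of a suitable compact symmetric space lands in a sphere in such a way that the first nontrivial spherical eigenfunction restricts to something affinely related to the push-forward of the eigenfunction to the second largest Laplace--Beltrami eigenvalue of $\mathcal{M}$. Your program is essentially this same avenue, made more precise, and your analytic steps (your second and third steps) are a sound and in fact streamlined generalization of the paper's machinery for $\mathbb{RP}^m$: the Cauchy--Schwarz-plus-homogeneity bound $|\sum_k \varphi_{l,k}(x)\varphi_{l,k}(y)| \le N_l/\mathrm{Vol}(M)$ together with Weyl-law growth of $N_l$ cleanly replaces the Gegenbauer estimates of Lemma~\ref{lem:Gegenbauer_bound}, and the discrete spectral gap below the largest nonzero eigenvalue $\lambda_\star$ is all that is really needed for the Cauchy-product bounds, the Taylor expansion, and the application of Lemma~\ref{lem:equal_lims} (the specific quadratic exponent arithmetic in Lemma~\ref{lem:bound} is not essential, as you observe).

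However, there is a genuine gap, and you name it yourself: your first step is asserted, not proved, and it is precisely where the content of the conjecture lives. The hypothesis of Conjecture~\ref{conj:diff-ex} gives only that \emph{some} minimal-dimension smooth isometric embedding $\Phi$ is unique up to ambient isometry; nothing supplied tells you that $\Phi(\mathcal{M})$ lies in a round sphere, that it is minimal in that sphere (both prerequisites for invoking Takahashi's theorem), or that its coordinate functions span the $\lambda_\star$-isotypic component of $L^2(\mathcal{M})$ rather than a higher one or a mixture of several. Without that identification, your steps two and three prove only the containment $E^{lower}_{\infty}(X),\,E^{upper}_{\infty}(X)\subseteq \argmax_{y\in \mathcal{M}}\mathbb{E}[Z_\star(X,y)]$, which is a statement about the first nontrivial eigenspace and is not yet related to the extrinsic mean set of the given $\Phi$; for higher-rank spaces the nontrivial Do~Carmo--Wallach moduli you mention makes the required rigidity genuinely open. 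Two further caveats: for higher-rank spaces the $\lambda_\star$-eigenspace need not be a single irreducible $G$-module, so even the equal-weight form $\Phi = \alpha(\varphi_{\star,1},\dots,\varphi_{\star,N_\star})$ and the resulting affine relation between $\|\Phi(x)-\Phi(y)\|^2$ and $Z_\star(x,y)$ need justification; and even granting all of step one, your argument (like the paper's Theorem~\ref{thm:results_rp_embed}) yields containment in general and equality only under a simplicity assumption on the maximizer, so the conjecture's claim of convergence \emph{to} the extrinsic mean set would need the corresponding caveat. In short, this is a well-informed research program that matches the paper's own outlook, but it is not a proof; the conjecture remains open exactly at the step you defer.
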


Based on this conjecture, we introduce the shorthand \emph{isometric-extrinsic mean set} for the \emph{extrinsic mean set in the smooth isometric embedding of lowest embedding space dimension unique up to isometries of the embedding space}. If the conjecture holds true, the \emph{isometric-extrinsic mean set} becomes a viable location statistic on connected compact symmetric spaces and its identification with the long time diffusion limit makes it easily calculable without actually calculating the isometric embedding, which can be very tedious. Instead, the long time limit of the heat kernel reduces it to a term defined by the eigenfunction of the Laplace-Beltrami-Operator with the second largest eigenvalue, which can usually be easily optimized.

It is impractical to approach this conjecture one case at a time. Instead it is highly desirable to leverage general results from differential geometry to attempt a proof to Conjecture~\ref{conj:diff-ex}. If we denote by $\psi^{\mathcal{M}}_1$ the eigenfunction to the second largest eigenvalue of the Laplace-Beltrami operator on a manifold $\mathcal{M}$, we can formulate the following possible avenue towards a proof to may be to Conjecture~\ref{conj:diff-ex}: check whether the isometric embedding $\Phi$ of a suitable compact symmetric space $\mathcal{M}$ always embeds it into a sphere $\mathbb{S}^M$ of some dimension $M$ in such a way that $\psi^{\mathbb{S}^M}_1$ remains an eigenfunction under restriction of the Laplace-Beltrami operator to the embedded $\Phi(\mathcal{M})$ and is affinely related to the push-forward $\Phi^*\psi^{\mathcal{M}}_1$. If this can be shown, Conjecture~\ref{conj:diff-ex} follows immediately.

\section*{Acknowledgments}

B. Eltzner gratefully acknowledges funding by the DFG~CRC~1456 project~B02 as well as travel funding by the organizers of the HeKKSaGOn workshops ``Analysis, Geometry and Stochastics
on Metric Spaces'' and ``Metric and Measures''.

\bibliographystyle{Chicago}
\bibliography{BibPaper}

\end{document}